\newcommand{\diam}{\operatorname{diam}}
\newcommand{\ddelta}{\overline{\delta}}
\newcommand{\vvartheta}{\overline{\vartheta}}
\newcommand{\NN}{\mathfrak N}
\newcommand{\QQ}{\mathfrak Q}
\newcommand{\SE}{\mathfrak S}
\newcommand{\Dz}{\operatorname{Dz}}
\newcommand{\Cz}{\operatorname{Cz}}
\newcommand{\Sz}{\operatorname{Sz}}
\newcommand{\eqnorm}[1]{{\left\vert\kern-0.25ex\left\vert\kern-0.25ex\left\vert #1 
    \right\vert\kern-0.25ex\right\vert\kern-0.25ex\right\vert}}
\newcommand{\norm}[1]{\left\lVert#1\right\rVert}
\def\norma{\norm{\hspace*{.35em}}}
\newcommand\eqnorma{\eqnorm{\hspace*{.35em}}}
\newcommand{\conv}{\operatorname{conv}}
\newcommand{\cconv}{\overline{\conv}}
\newcommand{\M}{\mathcal M}
\begin{document}

\baselineskip=14pt

\newtheorem{thm}{Theorem}[section]
\newtheorem{theo}[thm]{Theorem}
\newtheorem{prop}[thm]{Proposition}
\newtheorem{coro}[thm]{Corollary}
\newtheorem{lema}[thm]{Lemma}
\newtheorem{defi}[thm]{Definition}
\newtheorem{ejem}[thm]{Example}
\newtheorem{rema}[thm]{Remark}
\newtheorem{fact}[thm]{Fact}
\newtheorem{open}[thm]{PROBLEM}

\title{Uniformly convex renormings and \linebreak generalized cotypes}

\author{Luis C. Garc\'ia-Lirola}
\address {Department of Mathematical Sciences, Kent State University, Kent, Ohio 44242, USA}
\email {lgarcial@kent.edu} 

\author{Mat\'ias Raja}
\address{Departmento de Matem\'aticas, Universidad de Murcia, 30100 Espinardo, Murcia, Spain}
\email{matias@um.es}

\thanks{This work was supported by the Grants of Ministerio de Econom\'ia, Industria y Competitividad MTM2017-83262-C2-2-P; and Fundaci\'on S\'eneca Regi\'on de Murcia 20906/PI/18. The first author is also supported by a postdoctoral grant from Fundaci\'on S\'eneca.}

\subjclass[2010] {Primary 46B20; Secondary 46B03}
\date {November 22, 2019} \keywords{Super-reflexive Banach space; Uniform Convexity;  Renorming; Cotype, UMD space}

\begin{abstract}
We are concerned about improvements of the modulus of convexity by renormings of a super-reflexive Banach space. Typically optimal results are beyond Pisier's power functions bounds $t^p$, with $p \geq 2$, and they are related to the notion of generalized cotype. We obtain an explicit upper bound for all the modulus of convexity of equivalent renormings and we show that if this bound is equivalent to $t^2$, the best possible, then the space admits a renorming with modulus of power type $2$. We show that a UMD space admits a renormings with modulus of convexity bigger, up to a multiplicative constant, than its cotype. We also prove the super-multiplicativity of the supremum of the set of cotypes.
\end{abstract}

\maketitle

\section{Introduction}

The classical Lebesgue spaces $L^p$ play a central role in Banach space theory, not only as examples but also in many chapters as bases, operators, linear and nonlinear classification, for instance. The very definition of these spaces leans on the convexity of the power function $t^p$ with $p\geq 1$. There are at least two other topics in Banach space geometry where power functions $t^p$ arise in such a way that the best exponent $p$ that could be placed there provides valuable information (isomorphic and nonlinear) about the structure of the space. One of those topics is the theory of type and cotype (always Rademacher along this paper), well known in the linear theory but with important applications and generalizations in the nonlinear settings. The other topic affects only to super-reflexive spaces, and consists in the functions that can be modulus of uniform smoothness or convexity of equivalent norms. After Pisier's famous result power functions, up to a constant, can do the work. Both topics are not totally independent. For instance, if $X$ has an equivalent norm with a modulus of uniform convexity bonded below by $ct^p$ with $c>0$ and $p \geq 2$ then $p$ is a cotype of $X$.\\

The spaces $L^p$ are considerably generalized by Orlicz spaces $L^\Phi$ as one pass from the linear order of the parameter $p \in [1,+\infty)$ to the much more complex order among the Orlicz functions defined on 
${\mathbb R^{+1}}$. It turns out that the use of Orlicz-Young functions $\Phi$ instead of the power functions $t^p$ allows us to obtain finer information on the space $X$ when applied to the topics discussed above. According to Figiel \cite{figiel2}, a nonnegative nondecreasing function $\phi(t)$ is a {\it generalized cotype} of the space $X$
if there exist constants $a,b>0$ such that $\sum_{k=1}^{n} \phi(\|x_k\|)
 \leq b$ whenever $x_1,\dots,x_n \in X$ satisfy
$$ \int_{0}^{1} \| \sum_{k=1}^{n} r_k(t) \, x_k \| \, dt \leq a $$
where $(r_k(t))$ are the Rademacher functions, see \cite{banach, LT} for instance.  A remarkable
result of Figiel and Pisier \cite{FP} establishes that any modulus of convexity is a generalized cotype. There is also a notion of {\it generalized type}, not only for spaces but also for operators, and a duality theory that we shall no consider here. See \cite{Mas} for more information on this subject.\\

In relation to moduli of convexity under equivalent renormings,  the second named author proved in \cite{raja2} the following result.

\begin{theo}[\cite{raja2} Theorem 1.2]\label{main1}
Let $X$ be a superreflexive Banach space. There exists a positive decreasing
submultiplicative function ${\mathfrak N}_X(t)$
defined on $(0,1]$ satisfying that ${\mathfrak N}_X(t)^{-1}$ is 
the supremum, up to equivalence,
with respect to the order $\preceq$ of the set
$$ \{ \delta_{|\!|\!| \cdot |\!|\!|}(t): |\!|\!| \cdot |\!|\!| ~\mbox{is an equivalent norm on}~ X \}. $$
\end{theo}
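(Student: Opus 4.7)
The plan is to define $\NN_X$ essentially as the reciprocal of the pointwise supremum of the moduli of convexity of all equivalent renormings of $X$, and then verify positivity, that this supremum is realized up to equivalence by a single norm, and submultiplicativity. For each equivalent norm $\eqnorma$ on $X$, let $\delta_{\eqnorma}$ denote its modulus of convexity, and set
$$G(t) := \sup\{\delta_{\eqnorma}(t) : \eqnorma \text{ is an equivalent norm on } X\}.$$
Note that $G(t)\le 1$ for all $t$, since moduli of convexity are bounded by $1$, and $G$ is nondecreasing because each $\delta_{\eqnorma}$ is. Super-reflexivity together with Pisier's renorming theorem supplies an equivalent norm with power-type modulus $ct^p$, so $G(t) > 0$ for every $t \in (0,1]$.

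Next, I would show that $G$ is, up to a universal constant, the modulus of convexity of a single equivalent norm, so that the supremum is attained in the sense of $\preceq$-equivalence. The idea is to pick a countable family $\eqnorm{\cdot}_n$ of equivalent norms whose moduli approximate $G$ on a dense set of values of $t$, and after rescaling each so that the ensuing sum converges (recall that the modulus of convexity is scale-invariant), form the quadratic average $\eqnorm{x}^2 := \sum_n 2^{-n}\, \eqnorm{x}_n^2$. A computation using the parallelogram-type identity summand-by-summand shows that the modulus of $\eqnorma$ dominates, up to a multiplicative constant, each $\delta_{\eqnorm{\cdot}_n}$, and hence $G$ itself. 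This produces an almost optimal renorming and justifies setting $\NN_X := G^{-1}$, which is automatically decreasing; continuity and strict monotonicity can be arranged by passing to a regularized majorant.

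The principal obstacle is submultiplicativity of $\NN_X$, which is equivalent to a lower bound $G(st) \geq c\, G(s) G(t)$ with $c > 0$ independent of $s,t \in (0,1]$. Given such $s,t$, pick norms $\eqnorm{\cdot}_s$ and $\eqnorm{\cdot}_t$ whose moduli at $s$ and $t$ are close to $G(s)$ and $G(t)$. One must construct a third equivalent norm whose modulus at $st$ is bounded below by $c\,G(s) G(t)$. The natural route is a two-scale construction: use $\eqnorm{\cdot}_t$ to gain convexity at scale $t$, and combine it with a suitably rescaled copy of $\eqnorm{\cdot}_s$ so that pairs $\eqnorma$-separated by $\varepsilon \approx st$ decompose into contributions of sizes $\approx t$ and $\approx s$ in the constituent norms, at which point the convexity gains multiply. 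Executing this cleanly with constants universal in $s,t$ is the delicate technical point, and is most naturally handled either via an iterated self-improvement renorming in the spirit of Figiel–Pisier or via an ultrapower argument. Once the inequality $G(st) \geq c\,G(s) G(t)$ is established, submultiplicativity of $\NN_X$ follows immediately after absorbing $c$ into the definition, and the theorem is proved.
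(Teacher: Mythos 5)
Two preliminary remarks on scope. The paper does not actually prove this statement; it quotes it from \cite{raja2}, and its own contribution (Theorem~\ref{main-Szlenk}) is an independent identification ${\mathfrak N}_X(t)\sim \Sz(B_{L^2(X)},t)$, in which submultiplicativity is inherited from the Szlenk derivation together with the isometry $L^2(L^2(X))\cong L^2(X)$, and the two comparison inequalities come from the Godefroy--Kalton--Lancien AUC renorming theorem plus the transfer of balanced AUC norms on $L^2(X)$ down to UC norms on $X$. So any proof you give here is necessarily a different route; the question is whether yours closes.

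It does not, on two counts. First, the step where you realize $G$ as the modulus of a single norm via $\eqnorm{x}^2=\sum_n 2^{-n}\eqnorm{x}_n^2$ fails: the summand-by-summand parallelogram estimate only yields $\delta_{\eqnorma}(t)\gtrsim 2^{-n}\,\delta_{\eqnorm{\cdot}_n}(ct)$, so the multiplicative constant degrades with $n$ and the combined norm does \emph{not} dominate $G$ up to a universal constant (try $\delta_n(t)=t^{2+1/n}$: the weighted sup is $o(t^2)$ while $G(t)\sim t^2$). This is not a repairable technicality --- Figiel's example, cited in the introduction, is a super-reflexive space with no best modulus of convexity, so the $\preceq$-supremum is genuinely not attained in general. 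Since you use attainment to justify that $G^{-1}$ is the \emph{least} upper bound, minimality of the supremum is left unsupported; the intended mechanism is instead to restrict to a fixed interval of equivalence (say $2$-equivalent norms), where a Baire category argument makes the pointwise supremum agree with the $\preceq$-supremum with uniform constants, and one must then separately argue that restricting the equivalence interval does not change the answer. Second, the submultiplicativity $G(st)\geq c\,G(s)G(t)$ --- which is the substantive content of the theorem and the only reason ${\mathfrak N}_X(t)^{-1}$ can be upgraded to an Orlicz--Young function --- is only described (``two-scale construction\dots the delicate technical point'') and never executed; no construction of the third norm, no decomposition of an $st$-separated pair into an $s$-scale and a $t$-scale contribution, and no control of the constants is given. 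As written, the proposal establishes only the soft parts (positivity via Pisier's theorem, monotonicity) and leaves the core of the theorem open.
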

\vspace{2mm}

The order $\preceq$ is established for functions defined on $(0,1]$. 
We write $\phi \preceq \psi$ if there is
a constant $c>0$ such that $\phi(t) \leq c \, \psi(t)$ for all $t \in (0,1]$. If
$\phi \preceq \psi$ and $\psi \preceq \phi$, then we say that $\phi$ and 
$\psi$ are {\it equivalent}, denoted by $\sim$. A Baire category argument shows that we may use pointwise order if we restrict ourselves to norms within an interval of equivalence with respect to a fixed norm, for instance, $2$-equivalent norms. Recall that the norms $\norma_1$ and $\norma_2$
are $\gamma$-{\it equivalent} for $\gamma>1$
if $\gamma^{-1} \|x\|_1 \leq \norm{x}_2 \leq \gamma \|x\|_1$.\\

The function ${\mathfrak N}_X(t)^{-1}$ can be showed to be equivalent to an Orlicz-Young function defined on $[0,\infty)$ thanks to the submultiplicativity of ${\mathfrak N}_X(t)$. The construction of ${\mathfrak N}_X(t)$  given in \cite{raja2} is rather complicated. The first aim of this paper is to provide a different construction that casts additional features, in particular, an explicit form.

\begin{theo}\label{main-Szlenk}
For any super-reflexive space $X$ we have ${\mathfrak N}_X(t) \sim \Sz(B_{L^2(X)},t)$.
\end{theo}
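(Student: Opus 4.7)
The plan is to verify the equivalence by establishing both inequalities ${\mathfrak N}_X(t) \preceq \Sz(B_{L^2(X)},t)$ and $\Sz(B_{L^2(X)},t) \preceq {\mathfrak N}_X(t)$ separately. By Theorem~\ref{main1}, these translate respectively into the existence of a specific equivalent norm on $X$ whose modulus of convexity is at least $c\,\Sz(B_{L^2(X)},t)^{-1}$, and into the assertion that no equivalent renorming of $X$ admits a modulus of convexity significantly exceeding that quantity. In both directions the Bochner space $L^2(X)$ mediates the comparison: any uniformly convex renorming of $X$ lifts quantitatively to one of $L^2(X)$, while conversely the Szlenk derivation of $B_{L^2(X)}$ can be transferred back to $X$ by the averaging structure of $L^2$.

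For the upper estimate $\Sz(B_{L^2(X)},t) \preceq {\mathfrak N}_X(t)$, take an arbitrary equivalent norm on $X$ with modulus of convexity $\delta$. A quantitative version of the Day--Clarkson computation shows that the induced Bochner norm on $L^2(X)$ has modulus of convexity comparable to $\delta$. The standard slicing principle in any uniformly convex space --- that $n$ successive $t$-derivations of the unit ball cost at least $n\cdot\delta(ct)$ --- then yields $\Sz(B_{L^2(X)},t)\cdot \delta(ct)\leq C$. Taking the supremum over all equivalent norms on $X$ and invoking Theorem~\ref{main1} completes this direction.

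For the converse ${\mathfrak N}_X(t) \preceq \Sz(B_{L^2(X)},t)$, one must construct an explicit renorming of $X$ with the claimed modulus. The plan is to iterate the Szlenk derivation on $B_{L^2(X)}$ to produce a stratification indexed by derivation count, build from this data a weakly lower-semicontinuous convex function on $L^2(X)$, and descend to $X$ via the natural isometric embedding of $X$ as constant functions in $L^2(X)$ combined with $L^2$-averaging. The homogenization of the resulting convex function on $X$ is the required norm, and the derivation count translates into the advertised lower bound on its modulus of convexity.

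The main obstacle will be the quantitative control in this construction, specifically the verification that the averaging promotes Szlenk-type data into genuine uniform convexity on $X$ rather than only an asymptotic variant. This is precisely why $L^2(X)$ is the correct ambient space: the $L^2$-averaging --- in the spirit of Pisier's martingale inequalities --- is what converts asymptotic convexity at the level of $L^2(X)$ into uniform convexity at the level of $X$. A secondary technical issue will be to maintain submultiplicativity of ${\mathfrak N}_X$ and uniform control over the full interval $(0,1]$, so that the comparison holds not merely near the origin.
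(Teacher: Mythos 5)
Your overall architecture --- two one-sided estimates mediated by $L^2(X)$, with the easy direction obtained by lifting a uniformly convex norm on $X$ to the Bochner norm and slicing, and the hard direction resting on the principle that $L^2$-averaging upgrades asymptotic uniform convexity of $L^2(X)$ to genuine uniform convexity of $X$ --- is the same as the paper's. The first direction is essentially fine: it is the chain $\Sz\leq\Cz\leq\Dz\leq\delta^{-1}+1$ together with the quantitative Day--Figiel lifting of moduli to $L^2(X)$. Your identification of $L^2$-averaging as the mechanism converting asymptotic into genuine uniform convexity is exactly the content of Theorem~\ref{th:renorm}, proved in the paper via balanced norms and sequences alternating $x$ and $y$ on dyadic intervals.

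The gap is in the hard direction ${\mathfrak N}_X(t)\preceq\Sz(B_{L^2(X)},t)$, where you propose to ``iterate the Szlenk derivation, build from this data a weakly lower-semicontinuous convex function, and homogenize.'' Two concrete problems. First, the iterated Szlenk sets $s^n_t(B_{L^2(X)})$ are not convex, so no convex function is directly available from that stratification; one must pass to the convex Szlenk derivation, and the step from a bound on $\Cz(B_{L^2(X)},t)$ to a $2$-equivalent norm on $L^2(X)$ with $\vvartheta\geq\Cz(B_{L^2(X)},ct)^{-1}$ is not a routine homogenization --- it is the deep renorming theorem of Godefroy, Kalton and Lancien (Theorem~\ref{GLK-th}), which you neither cite nor reprove, plus a reduction to the separable case (Proposition~\ref{glue}). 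Second, even granting that, you obtain the \emph{convex} Szlenk index of $L^2(X)$, not the Szlenk index appearing in the statement; the equivalence $\Cz(B_{L^2(X)},t)\sim\Sz(B_{L^2(X)},t)$ is itself nontrivial and is obtained in Proposition~\ref{prop:eqL2} from Lancien's inequality $\Dz(B_Y,t)\leq\Sz(B_{L^2(Y)},t/2)$ together with the isometry $L^2(L^2(X))\cong L^2(X)$. In short, what you label ``the main obstacle'' and ``a secondary technical issue'' are precisely the substance of the proof, and the plan as written does not supply them.
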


Here $\Sz$ stands for the Szlenk index whose definition is reminded later. The proof of this theorem relies on the fine results of G.~Godefroy, N.~Kalton and G.~Lancien \cite{GKL} and it is totally independent of the arguments from \cite{raja2}, so it provides new connections between uniformly convex renorming and {\it asymptotically uniformly convex} renorming.\\

However, the methods we have used do not provide an answer to the problem of obtaining a renorming of a Banach space with modulus of uniform convexity greater than $c \, \phi(t)$ where $c>0$ and $\phi(t)$ is a given, and feasible, positive function. Such a function must be a cotype of $X$, so the question could be reformulated as if given a cotype $\phi(t)$ on a Banach space we can renorm it with a modulus of convexity better than $\phi(t)$. That problem was investigated by Figiel \cite{figiel, figiel1, figiel2} and solved positively in Banach spaces with unconditional bases (and more generally l.u.st. spaces). In absence of bases, martingales seem to us a suitable tool since they can manage information coming from arbitrarily separated vectors. In order to better understand this, note that the information provided by the dentability index just tells  us about the growing of uniformly separated martingales (bushes). Renormings whose modulus of convexity is a generalized cotype using martingales were studied by Garling \cite{Garling} who provided a characterization, however no new example was included.\\  

A Banach space $X$ is said to have the {\it uncondicional martingale difference property}  (UMD for short) if for some $1<p<+\infty$ (equivalently, for all $1<p<+\infty$) any $X$ valued martingale $(f_n)$ which is bounded in $L^p(X)$ has unconditionally convergent differences, that is, $\sum_{n=1}^\infty df_n$ is unconditionally convergent in $L^p(X)$ , where $df_n=f_n-f_{n-1}$. Notably UMD spaces are super-reflexive and they include many classical spaces as $L^p$ for $1<p<+\infty$ or the reflexive Orlicz spaces \cite{FerGar}. We have this generalization of the known fact that an UMD Banach space with classic cotype $p$ can be renormed with modulus of convexity bounded below by $c \, t^p$ \cite[Proposition~10.40]{Pisier_libro}.

\begin{theo}\label{UMD-renorm}
Let $X$ be an UMD Banach space and $\phi(t)$ a generalized cotype on it. Then there exists an equivalent norm 
$\eqnorma$ on $X$ such that $\delta_\eqnorma (t) \geq c \, \phi(t)$ for some $c>0$.
\end{theo}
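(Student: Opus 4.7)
The plan is to follow, in spirit, Pisier's proof that an UMD space of (classical) cotype $p$ can be renormed with modulus of convexity of power type $p$ (\cite[Proposition~10.40]{Pisier_libro}), replacing the power function $t^p$ throughout by a convex Orlicz-Young function $\Phi$ equivalent to $\phi$. The UMD hypothesis will be used to convert the (Rademacher-type) cotype inequality into a martingale-type inequality, which then feeds into an infimum-type construction of an equivalent norm whose modulus of convexity is controlled directly by $\Phi$.

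First, following Figiel \cite{figiel2}, I would replace $\phi$ by an equivalent convex Orlicz-Young function $\Phi$ satisfying a $\Delta_2$-condition on $(0,1]$, with the cotype inequality taking the scale-free form
$$\sum_{k=1}^n \Phi\!\left(\frac{\|x_k\|}{\lambda}\right) \le 1 \quad\text{whenever}\quad \int_0^1 \Big\|\sum_{k=1}^n r_k(t)\, x_k\Big\|\, dt \le c\lambda,$$
for a suitable constant $c>0$. Next, fixing any $1<p<\infty$ for which $X$ is UMD$_p$, I would invoke UMD to obtain $\|\sum_k \epsilon_k\, df_k\|_{L^p(X)} \le C \|f_\infty\|_{L^p(X)}$ for every $X$-valued martingale $(f_n)$ with $f_0=0$, uniformly in signs $\epsilon_k \in \{\pm 1\}$. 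Averaging over the signs, combining with Kahane's inequality and Fubini, and then applying the Orlicz cotype inequality pointwise in $\omega$ to the sample families $(df_k(\omega))_k$, one should obtain a \emph{martingale cotype} estimate of the form
$$\mathbb{E}\Big[\sum_k \Phi\big(\|df_k\|/\lambda\big)\Big] \le 1 \quad \text{whenever}\quad \|f_\infty\|_{L^p(X)} \le c'\lambda.$$

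With that in hand, I would introduce the Minkowski-type functional
$$\eqnorm{x} = \inf\!\Big\{\lambda > 0 : \exists\text{ an }X\text{-valued martingale }(f_n),\ f_0=0,\ f_\infty=x,\ \mathbb{E}\big[\textstyle\sum_k \Phi(\|df_k\|/\lambda)\big]\le 1\Big\}.$$
Subadditivity follows by concatenating martingales together with the convexity of $\Phi$; positive homogeneity is built in; and the equivalence $\|\cdot\|\sim \eqnorm{\cdot}$ is a direct consequence of the martingale cotype estimate (upper bound) and the trivial one-step martingale $f_1=x$ (lower bound). Finally, to obtain $\delta_{\eqnorma}(t) \ge c\,\phi(t)$, I would fix $x,y\in X$ with $\eqnorm{x},\eqnorm{y}\le 1$ and $\eqnorm{x-y}\ge t$, pick a near-optimal martingale representing the midpoint $(x+y)/2$, and extend it by one last step splitting to $x$ or $y$ with equal probability. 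The extra Orlicz contribution $\tfrac12\Phi(\|x-y\|/2)\gtrsim \phi(t)$, combined with the convexity and the $\Delta_2$-property of $\Phi$, should force $\eqnorm{(x+y)/2} \le 1 - c'\phi(t)$.

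The hard part is precisely this final rescaling: unlike for power functions, the nonhomogeneity of $\Phi$ makes it nontrivial to quantify by how much $\lambda$ must decrease when an Orlicz sum already of total mass $1$ is augmented by a term of size $\Phi(ct)$. This is exactly where convexity plus the $\Delta_2$-property of $\Phi$ — secured during the Orlicz reformulation of Figiel — play an essential role, translating a gain of $\Phi(ct)$ in the Orlicz sum into a gain of order $\phi(t)$ in the scale. The UMD hypothesis, in contrast, enters only in the passage from Rademacher to martingale cotype, but is indispensable there: without unconditionality of martingale differences, the generalized cotype (a statement about sums with independent signs) cannot be transferred to the correlated increments $df_k$.
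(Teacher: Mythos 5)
Your overall architecture matches the paper's: normalize $\phi$ via Figiel, use UMD to convert the Rademacher cotype inequality into a statement about martingale differences, and then extract the norm from that martingale estimate. The first two steps are essentially the paper's Proposition~4.1 and Corollary~4.2 combined with the UMD hypothesis, and your quantitative phrasing of the resulting martingale cotype estimate is, if anything, more careful than the paper's (which records only finiteness of $\sum_n\int_0^1\phi(\|df_n\|)$ for each martingale in $\M(B_X)$; the uniform bound actually needed does follow from the quantitative Proposition~4.1). The divergence is in the last step: the paper does not construct the norm at all --- it invokes Garling's characterization (Theorem~4.6 of the paper), which says precisely that a uniform bound on $\sum_n\int_0^1\phi(\|df_n\|)$ over $\M(B_X)$ yields a renorming with $\delta(t)\ge c\,\phi(t)$ --- whereas you attempt to build the norm by hand.

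That hand-built construction is where the proposal breaks down. A martingale with $f_0=0$ satisfies $\mathbb{E}[f_n]=0$ for every $n$, so it cannot converge to a nonzero constant $x$: the admissible set in your Minkowski functional is empty for $x\ne 0$ (and if you drop $f_0=0$, the condition $f_\infty=x$ a.s.\ forces the constant martingale, so the functional degenerates to a multiple of the original norm). Likewise, ``extend a martingale representing $(x+y)/2$ by one last step splitting to $x$ or $y$'' does not typecheck unless that martingale is constant. The Pisier--Garling construction runs the martingale in the opposite direction: one optimizes over martingales \emph{starting} at the given point, with a functional of the form $\mathbb{E}\|f_\infty\|^q$ minus a small Orlicz square-function term, and the competitor for $(x+y)/2$ performs the split to $x$ and $y$ in its \emph{first} step. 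Even after reorienting, the step you yourself flag as ``the hard part'' --- converting a gain of $\Phi(ct)$ in the Orlicz sum into a multiplicative gain of order $\phi(t)$ in the norm --- is genuinely nontrivial for non-power $\Phi$ and is exactly the content of Garling's Theorem~3; convexity plus $\Delta_2$ alone do not obviously close it (one also needs the convexity of $t\mapsto\Phi(t^{1/2})$ secured by Figiel to control the interaction between the terminal term and the Orlicz term). So either cite Garling, as the paper does, or expect to reproduce his proof in full; as written, the renorming step is both misoriented and incomplete.
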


Note that in UMD spaces, after the previous Theorem, or in super-reflexive spaces with unconditional bases, after Figiel's results, the function $\NN_X(t)^{-1}$ becomes the supremum of the set of cotypes with respect to the order used in Theorem~\ref{main1}. So as to discuss the supremum of the cotypes independently of uniformly convex renorming let us introduce the following definition. We say that a cotype $\phi$ is {\it normalized} if $\sum_{k=1}^n \phi(\|x_k\|) \leq 1$ whenever 
$$ \int_{0}^{1} \| \sum_{k=1}^{n} r_k(t) \, x_k \| \, dt \leq 1. $$
We will always assume that a normalized cotype is defined just on $[0,1]$. With respect to the supremum of the cotypes, we have the following result in the spirit of Theorem~\ref{main1}.

\begin{theo}\label{sup-cotype}
Let $X$ be a superreflexive Banach space. There exists a positive decreasing
submultiplicative function ${\mathfrak Q}_X(t)$
defined on $(0,1]$ satisfying that
$$ \sup \{ \phi(t): \phi \mbox{ is a convex normalized cotype of } X \} \leq ( {\mathfrak Q}_X(t) - 1)^{-1} $$
 and for every $\varepsilon>0$ the function  $\phi_\varepsilon(t)=\QQ_X(\varepsilon)^{-1}$ if $t \geq \varepsilon$ and  $\phi_\varepsilon(t)=0$ otherwise is a normalized cotype. Moreover, $\QQ_X(t)^{-1}$ is the supremum with respect to $\preceq$ of the set of all of cotypes of $X$ defined on $[0,1]$.
\end{theo}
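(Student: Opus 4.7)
The plan is to define $\QQ_X(\varepsilon):=1+N_X(\varepsilon)$, where
$$N_X(\varepsilon):=\sup\Bigl\{n\in\mathbb{N}:\exists\,(x_k)_{k=1}^n\subset X,\ \|x_k\|\ge\varepsilon,\ \int_0^1\Bigl\|\sum_{k=1}^n r_k(t)\,x_k\Bigr\|\,dt\le 1\Bigr\}.$$
Super-reflexivity of $X$ gives some classical Rademacher cotype $q<\infty$, and the cotype inequality applied to any such system forces $N_X(\varepsilon)\le c\,\varepsilon^{-q}$ for some constant $c$, so $\QQ_X$ is finite on $(0,1]$; monotonicity is immediate, since a witness at scale $\varepsilon'\ge\varepsilon$ is automatically a witness at scale $\varepsilon$. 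The upper bound in (1) is then straightforward: for any convex normalized cotype $\phi$ and any witness $(x_k)_{k=1}^n$ with $n=N_X(\varepsilon)$, monotonicity of $\phi$ gives
$$(\QQ_X(\varepsilon)-1)\,\phi(\varepsilon)\;=\;n\,\phi(\varepsilon)\;\le\;\sum_{k=1}^n\phi(\|x_k\|)\;\le\;1.$$

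For the step function $\phi_\varepsilon(t):=\QQ_X(\varepsilon)^{-1}\mathbf{1}_{[\varepsilon,1]}(t)$, take any $(x_k)_{k=1}^n$ with $\int_0^1\|\sum r_k(t) x_k\|\,dt\le 1$ and set $S:=\{k:\|x_k\|\ge\varepsilon\}$. Averaging the signs $(r_k)_{k\notin S}$ conditionally and then applying Jensen's inequality (the standard contraction principle for Rademacher systems) shows that $\int_0^1\|\sum_{k\in S}r_k(t) x_k\|\,dt\le 1$, hence $|S|\le N_X(\varepsilon)$, and therefore $\sum_k\phi_\varepsilon(\|x_k\|)=|S|\,\QQ_X(\varepsilon)^{-1}<1$; so $\phi_\varepsilon$ is a normalized cotype of $X$. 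The final assertion then follows by a minimality argument: any $\psi$ on $[0,1]$ with $\phi\preceq\psi$ for every cotype $\phi$ must in particular satisfy $\phi_\varepsilon\preceq\psi$ for every $\varepsilon$, and evaluating at $t=\varepsilon$ yields $\QQ_X(\varepsilon)^{-1}\le C\psi(\varepsilon)$, that is, $\QQ_X^{-1}\preceq\psi$.

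The main obstacle is the submultiplicativity. Since the theorem asks only for $\preceq$-equivalence in the final clause, it suffices to establish the qualitative bound
$$\QQ_X(\varepsilon_1\varepsilon_2)\;\le\;C\,\QQ_X(\varepsilon_1)\,\QQ_X(\varepsilon_2)$$
for some absolute $C$; rescaling $\QQ_X$ by $C$ then produces an exactly submultiplicative function, still $\preceq$-equivalent to the original and preserving the other conclusions up to adjustment of constants. The approach is a two-scale decomposition of an almost-maximal witness $(x_k)_{k=1}^n$ at scale $\varepsilon_1\varepsilon_2$: the indices $S_1:=\{k:\|x_k\|\ge\varepsilon_1\}$ are controlled by $N_X(\varepsilon_1)$ via restriction and contraction, while the complementary block consists of vectors of norm in $[\varepsilon_1\varepsilon_2,\varepsilon_1)$. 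Rescaling the latter by $1/\varepsilon_1$ produces a Rademacher system of norm $\ge\varepsilon_2$ but with Rademacher average inflated to at most $1/\varepsilon_1$; the delicate step, and the technical core of the argument, is to reorganize it --- by inserting auxiliary independent signs in a martingale/Haar-type fashion, in the spirit of the refinement argument used in~\cite{raja2} for Theorem~\ref{main1} --- into at most $N_X(\varepsilon_1)$ sub-blocks each of cardinality at most $N_X(\varepsilon_2)$. Securing this decomposition with the right constants is the main technical content; once it is in hand, the announced submultiplicativity, and the full theorem, follow.
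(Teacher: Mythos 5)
Your argument is incomplete at exactly the point you yourself flag as ``the technical core'': the submultiplicativity of $\QQ_X$ is never proved, only announced as a decomposition to be ``secured''. This is not a routine omission. The quantity you chose, $N_X(\varepsilon)=$ the maximal length of a system with \emph{every} $\|x_k\|\ge\varepsilon$ and Rademacher average at most $1$, is precisely the variant the paper singles out as problematic: the remark following the paper's proof observes that using $\min_{1\le k\le n}\|x_k\|$ instead of $n^{-1}\sum_{k=1}^n\|x_k\|$ would indeed remove the convexity hypothesis on $\phi$ (your first inequality confirms this), \emph{but} ``the corresponding definition of $\QQ$ does not seem to be submultiplicative.'' Your two-scale sketch hits the expected wall: after discarding the indices with $\|x_k\|\ge\varepsilon_1$ and rescaling the remaining block by $1/\varepsilon_1$, you hold a system whose Rademacher average is only bounded by $1/\varepsilon_1$, and there is no contraction-type principle that partitions such a system into $N_X(\varepsilon_1)$ (or even $O(1/\varepsilon_1)$) sub-blocks each of Rademacher average at most $1$; subsets inherit the upper bound on the average, but the bound does not subdivide across a partition. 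So the ``reorganization into sub-blocks'' you defer is the whole difficulty, and it is open whether it can be done for your $N_X$.

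The paper sidesteps this by defining $\QQ_X(\varepsilon)$ through the averaged condition: $n\ge\QQ_X(\varepsilon)$ forces $n^{-1}\sum_{k=1}^n\|x_k\|\le\varepsilon$. Averages do compose across a partition into blocks, so submultiplicativity follows from a clean two-level argument: the definition at scale $\varepsilon_1$ controls each of $m=\QQ(\varepsilon_2)$ blocks of length $n=\QQ(\varepsilon_1)$ by its blockwise Rademacher average, auxiliary independent signs $\overline r_k$ then control the average of these blockwise averages at scale $\varepsilon_2$, and the resulting double Rademacher average collapses to the original one by symmetry. The price is that bounding $\phi(t)$ by $(\QQ_X(t)-1)^{-1}$ now passes through Jensen's inequality, whence the convexity hypothesis in the statement. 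To salvage your route you would need either to prove the missing partition lemma or to show $N_X\sim\QQ_X$, neither of which is available. Separately, your treatment of the last clause only establishes that $\QQ_X^{-1}$ is a \emph{least} upper bound via the step functions $\phi_\varepsilon$; you still must show every (non-convex, non-normalized) cotype satisfies $\phi\preceq\QQ_X^{-1}$, which the paper gets from Figiel's domination of an arbitrary cotype by a convex one --- with your definition a rescaling of the constants $a,b$ would do, but it should be said. Finally, deriving finiteness of $N_X(\varepsilon)$ from the classical cotype of a super-reflexive space is legitimate, though the paper deliberately gives a self-contained ultrapower argument to avoid relying on renorming theory.
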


An easy application of Dvoretzky's theorem implies that $\QQ_X(t)^{-1} \preceq t^2$.
If there is a best cotype it has to be equivalent to ${\mathfrak Q}_X(t) ^{-1} $, however the existence of a best cotype is not guaranteed. Figiel \cite{figiel3} found an example of Banach space with no best cotype nor  best modulus of convexity. Nevertheless, there are important classes of Banach spaces where there are a best cotype and a best modulus of convexity, like the reflexive Orlicz spaces. With aims in a more general result, we have proved that if the function $\NN_X(t)$ is asymptotically the smallest possible then there is a renorming with a best modulus of convexity. 

\begin{theo}\label{dos}
Let $X$ be a super-reflexive Banach space such that $\NN_X(t) \sim t^{-2}$. Then $X$ has an equivalent norm $\eqnorma{}$ such that $\delta_\eqnorma{} (t) \geq c \, t^2$ for some $c>0$. As a consequence $X$ has a renorming with the best modulus of uniform convexity and its best cotype is $t^2$.
\end{theo}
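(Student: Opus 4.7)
The plan is to invoke Theorem~\ref{main-Szlenk} to turn the hypothesis on $\NN_X$ into a Szlenk-type condition on $L^2(X)$, and then use the rigidity of power type $2$ to extract a uniformly convex renorming of $X$. Applying Theorem~\ref{main-Szlenk} the assumption $\NN_X(t) \sim t^{-2}$ gives $\Sz(B_{L^2(X)},t) \sim t^{-2}$; since by Dvoretzky's theorem (combined with the Figiel--Pisier link between modulus of convexity and generalized cotype invoked elsewhere in the paper) one already has $\NN_X(t) \succeq t^{-2}$ on every super-reflexive space, the hypothesis is saying that the Szlenk index of $L^2(X)$ is as small as possible.

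The main work is then to convert this asymptotic information about $L^2(X)$ into a global uniform convexity property of $X$. The cleanest route goes through the dentability index: one uses the comparison $\Dz(B_X,t) \preceq \Sz(B_{L^2(X)},t)$, a dentability-versus-Szlenk inequality implicit in the machinery developed in \cite{raja2} and in \cite{GKL}. Combined with the previous step this yields $\Dz(B_X,t) \preceq t^{-2}$, and Lancien's characterization of uniformly convex renormings via the dentability index then furnishes an equivalent norm $\eqnorma$ on $X$ with $\delta_\eqnorma(t) \geq c\,t^2$ for some $c>0$. A parallel route, avoiding $\Dz$, would reinterpret $\Sz(B_{L^2(X)},t) \sim t^{-2}$ as a martingale cotype $2$ inequality for $X$ and invoke Pisier's renorming theorem to produce $\eqnorma$ directly.

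For the concluding statement, the bound $\delta_\eqnorma(t) \preceq t^2$ is automatic from the Dvoretzky argument recalled in the paper. Hence $\eqnorma$ has the best possible modulus of uniform convexity. Since the Figiel--Pisier theorem asserts that any modulus of convexity is a generalized cotype, $t^2$ is a cotype of $X$; and the Dvoretzky bound $\QQ_X(t)^{-1} \preceq t^2$ certifies that no strictly better cotype is available, so $t^2$ is the best.

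The main obstacle is precisely the passage from the asymptotic datum $\Sz(B_{L^2(X)},t) \preceq t^{-2}$ to the global UC estimate $\delta_\eqnorma(t) \geq c\,t^2$. In general the Szlenk power type governs only AUC renormings, not UC ones, so the argument has to exploit the extremality of the exponent $p=2$ (no analogue is available for $p>2$) together with the $L^2$-tensor structure, which allows asymptotic behavior of Rademacher or dyadic martingale averages in $L^2(X)$ to be promoted to a global norm inequality on $X$.
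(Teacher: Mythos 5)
There is a genuine gap at the central step. You reduce everything to the claim that $\Dz(B_X,t) \preceq t^{-2}$ (or a Szlenk/martingale reformulation of it) ``furnishes'' a single equivalent norm with $\delta_{\eqnorma}(t) \geq c\,t^2$, citing a Lancien-type characterization. No such theorem exists in that form, and it cannot: the dentability/Szlenk machinery (and Raja's Theorem~1.5 in \cite{raja2}, which is what the paper actually invokes) only produces, \emph{for each scale} $\tau$, a $2$-equivalent norm $\norma_\tau$ with $\delta_{\norma_\tau}(\tau) \geq a\tau^2$ \emph{at that one point}. This is exactly what it means for $\NN_X(t)^{-1}$ to be a supremum over renormings, and Figiel's counterexample quoted in the paper shows that in general this supremum is not attained by any single norm. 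Your closing paragraph correctly identifies this passage from scale-by-scale to global as ``the main obstacle,'' but the proposal never supplies the mechanism that overcomes it; it only gestures at ``the extremality of $p=2$'' and ``the $L^2$-tensor structure.'' The parallel route via martingale cotype $2$ and Pisier's theorem has the same status: deducing the martingale cotype $2$ inequality from $\Sz(B_{L^2(X)},t)\preceq t^{-2}$ is itself the hard, unproved step.

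The paper closes the gap with two concrete ingredients that your argument is missing. First, Figiel's result that $t^{-2}\delta_{\norma}(t)$ is equivalent to a non-decreasing function \emph{with a universal equivalence constant} $b$: this promotes the single-point estimate $\delta_{\norma_\tau}(\tau)\geq a\tau^2$ to $\delta_{\norma_\tau}(t)\geq ab\,t^2$ for all $t\geq\tau$, and it is precisely here that the extremality of the exponent $2$ enters (the analogous monotonicity fails for $t^{-p}\delta(t)$ with $p>2$). Second, a gluing step: taking $\norma_n=\norma_{1/n}$ and $\eqnorm{x}=\lim_n\sup\{\norm{x}_m : m\geq n\}$, as in Proposition~\ref{glue}, yields one norm satisfying $\delta_{\eqnorma}(t)\geq ab\,t^2$ at every scale. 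Note also that the paper does not pass through $L^2(X)$ or Theorem~\ref{main-Szlenk} at all for this result. Your concluding optimality argument (N\"ordlander/Dvoretzky) does match the paper's and is fine, but it rests on the unestablished renorming.
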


The paper is organized as follows. After this introduction, the second section contains some material on asymptotical uniformly convex renormings that we will need for the proof of Theorem~\ref{main-Szlenk} done in the third section. The fourth section is devoted to the implications for the Bochner-Lebesgues spaces $L^p(X)$ of a generalized cotype on $X$. That will allow us to prove the result on UMD spaces. The last section deals with the suprema of cotypes and moduli of convexity.\\

The authors wish to express their thanks to S.~Troyanski for drawing our attention to Figiel's almost unnoticed work on generalized cotypes and uniformly convex renorming.

\section{Asymptotic uniformly convexity revisited}

In order not to deal with the version for duals of the moduli all the Banach spaces are supposed to be reflexive in this section. 
Recall that the \emph{modulus of uniform convexity} of $(X,\norma)$ is given by
\[ \delta_{\norma}(t) = 1- \frac{1}{2}\sup\{\norm{x+y}:
\norm{x} = \norm{y} =1, \norm{x-y} > t \} \]
and the \emph{modulus of asymptotic uniform convexity} is given by
\[ \ddelta_{\norma}(t) = \inf_{\norm{x}=1}\sup_{\dim(X/Y)<\infty}\inf_{y\in Y, \norm{y}=1} \norm{x+ty}-1 \]
where $t\in (0,1]$. A Banach space $(X, \norma)$ is \emph{uniformly convex} (UC) if $\delta_{\norma}(t)>0$ for every $t$, and it is said to be \emph{asymptotically uniformly convex} (AUC) if $\ddelta_{\norma}(t)>0$ for every $t$.  The good sequential properties of the weak topology in reflexive spaces implies that we have
\[\ddelta_{\norma}(t)=\inf\{\liminf_n\norm{x+x_n}-1: \norm{x}=1, \norm{x_n}=t, x_n\stackrel{w}{\to} 0\}.\]

We wish to simplify computations when comparing equivalent norms on the same Banach space, so instead of using the usual modulus of convexity for an equivalent norm
$\eqnorma$, we will consider the following
relative moduli  
\begin{align*}
\vartheta_{\eqnorma}(t) &= 1- \frac{1}{2}\sup\{\eqnorm{x+y}:
\eqnorm{x} = \eqnorm{y} =1, \norm{x-y} > t \},\\
\vvartheta_{\eqnorma}(t) &= 1- \sup\{\eqnorm{x} : \exists (x_n)_{n=1}^\infty\stackrel{w}{\to} x, \eqnorm{x_n}\leq 1, \inf_{n\neq m} \norm{x_n-x_m}>t\}
\end{align*}
with the convection that $\sup\emptyset = 0$. 
Clearly we have \[\delta_{\eqnorma}(\gamma^{-1}t)
\leq \vartheta_{\eqnorma}(t)\leq \delta_{\eqnorma}(\gamma t)\]
if $\eqnorma$
is $\gamma$-equivalent to $\norma$ and $0<t\leq \gamma^{-1}$. Moreover, the same arguments that provide the equivalence between uniformly Kadec-Klee and asymptotic uniform convexity in reflexive spaces relate $\ddelta_{\eqnorma}$ and $\vvartheta_{\eqnorma}$. 

\begin{prop} Let $(X,\norma)$ be a reflexive space and $\eqnorma$ be a norm $\gamma$-equivalent to $\norma$. Then 
\[ \ddelta_{\eqnorma}(2^{-1}\gamma^{-1}t)
\leq \vvartheta_{\eqnorma}(t)\leq \ddelta_{\eqnorma}(2\gamma t) \quad \text{ for all } 0<t<(2\gamma)^{-1}.\]
\end{prop}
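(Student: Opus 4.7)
The plan is to establish each inequality via a direct sequential argument, adapting the classical equivalence between asymptotic uniform convexity and the uniform Kadec--Klee property in reflexive spaces. The constants $2\gamma^{\pm 1}$ are precisely what is needed to translate between separation in $\norma$ (appearing in $\vvartheta_{\eqnorma}$) and the $\eqnorma$-norms (implicit in $\ddelta_{\eqnorma}$), using $\gamma^{-1}\norma \leq \eqnorma \leq \gamma\norma$ together with the triangle inequality.

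For the left inequality, $\ddelta_{\eqnorma}(t/(2\gamma)) \leq \vvartheta_{\eqnorma}(t)$, I would start with a near-optimal admissible sequence $(x_n) \stackrel{w}{\to} x$ for $\vvartheta_{\eqnorma}(t)$ with $\eqnorm{x_n}\leq 1$ and $\alpha := \eqnorm{x}$ within $\epsilon$ of $1 - \vvartheta_{\eqnorma}(t)$ (the case $\alpha=0$ is trivial). Writing $y_n := x_n - x$ (weakly null) and passing to a subsequence with $\eqnorm{y_n}\to s$, the strict separation $\inf_{n\neq m}\norm{x_n-x_m}>t$ together with the triangle inequality and $\gamma$-equivalence forces $s > t/(2\gamma)$. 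Setting $\hat{x} := x/\alpha$, $\tau := t/(2\gamma)$, and $v_n := \tau y_n/\eqnorm{y_n}$, one has $\eqnorm{\hat{x}}=1$, $\eqnorm{v_n}=\tau$, $v_n \stackrel{w}{\to}0$, and the decomposition $\hat{x}+v_n = (1-\beta_n)\hat{x} + \beta_n(\hat{x}+y_n/\alpha)$ with $\beta_n := \tau\alpha/\eqnorm{y_n} \to \tau\alpha/s < \alpha \leq 1$. Since $\eqnorm{\hat{x}+y_n/\alpha} = \eqnorm{x_n}/\alpha \leq 1/\alpha$, the triangle inequality gives $\liminf\eqnorm{\hat{x}+v_n}-1 \leq 1-\alpha \leq \vvartheta_{\eqnorma}(t)+\epsilon$, and letting $\epsilon\to 0$ finishes the direction.

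For the right inequality, $\vvartheta_{\eqnorma}(t) \leq \ddelta_{\eqnorma}(2\gamma t)$, assume $L := \ddelta_{\eqnorma}(2\gamma t) < 1$ (otherwise trivial). Fix $\epsilon>0$ and take a witness $\eqnorm{x}=1$, $x_n \stackrel{w}{\to} 0$, $\eqnorm{x_n}=2\gamma t$, with $\lambda_n := \eqnorm{x+x_n} \to \mu \leq 1+L+\epsilon$; the hypothesis $t<(2\gamma)^{-1}$ and $L+\epsilon<1$ guarantee $\mu<2$. Set $w_n := (x+x_n)/\lambda_n$; then $\eqnorm{w_n}=1$ and $w_n \stackrel{w}{\to} x/\mu$. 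A diagonal extraction based on weak lower semicontinuity of $\eqnorma$ furnishes $\eqnorm{x_n-x_m} \geq 2\gamma t - \epsilon'$ along a subsequence, so $\norm{x_n-x_m} \geq 2t - \epsilon'/\gamma$; substituting into $w_n-w_m = (x_n-x_m)/\lambda_n + (x+x_m)(\lambda_n^{-1}-\lambda_m^{-1})$ and using $\lambda_n\to\mu<2$ to absorb the second term, one checks $\norm{w_n-w_m}>t$ strictly for $n,m$ far enough along the subsequence. Hence $(w_n)$ is admissible for $\vvartheta_{\eqnorma}(t)$, yielding $\vvartheta_{\eqnorma}(t) \leq 1 - 1/\mu \leq \mu - 1 \leq L + \epsilon$, and $\epsilon\to 0$ concludes.

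The delicate point is the second direction: the renormalization $w_n=(x+x_n)/\lambda_n$ perturbs the differences $x_n - x_m$, and one must ensure that the resulting $\norma$-separation survives strictly above $t$. This is exactly where the hypothesis $t<(2\gamma)^{-1}$ enters, keeping $\mu$ bounded away from $2$ in the nontrivial case so that $(2t-\epsilon'/\gamma)/\mu$ remains strictly greater than $t$. The first direction reduces, by contrast, to a convex-combination bound once the lower estimate $s > t/(2\gamma)$ on $\eqnorm{y_n}$ is established.
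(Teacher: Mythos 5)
Your proof is correct, but it takes a genuinely different route from the paper's, most visibly in the second inequality. For $\ddelta_{\eqnorma}(2^{-1}\gamma^{-1}t)\leq \vvartheta_{\eqnorma}(t)$ the two arguments share the same skeleton (both reduce to showing $\ddelta_{\eqnorma}(\tau)\leq 1-\eqnorm{x}$ for the weak limit $x$ of an admissible sequence, after noting $\norm{x_n-x}>t/2$ eventually), but the paper works with the finite-codimensional-subspace form of $\ddelta$ --- choosing $Y$ almost attaining the modulus at $x/\eqnorm{x}$ and perturbing $x_n-x$ into $Y$ --- and closes with a plain triangle-inequality bound, whereas you invoke the sequential characterization and close with the convex-combination identity $\hat x+v_n=(1-\beta_n)\hat x+\beta_n(\hat x+y_n/\alpha)$. (Your claim $s>t/(2\gamma)$ should be $s\geq t/(2\gamma)$, since only $\eqnorm{y_n}>\tau$ termwise is guaranteed; this is harmless because all you use is $\lim\beta_n\leq\alpha\leq 1$.) For $\vvartheta_{\eqnorma}(t)\leq\ddelta_{\eqnorma}(2\gamma t)$ the paper argues by contradiction: it constructs, inside $\ker x^*$ for a norming functional $x^*$ of $x$, a $2\gamma t$-separated sequence $(y_n)$ with $\eqnorm{x+y_n}<1+\alpha$, rescales by the \emph{constant} $1+\alpha<2$, and uses $x^*$ to bound the weak limit's norm from below, arriving at $1\leq(1+\alpha)(1-\vvartheta_{\eqnorma}(t))$. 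You instead argue directly from a near-optimal witness in the sequential characterization: separation of the $x_n$ themselves is extracted via weak lower semicontinuity and a diagonal argument (so no norming functional is needed), and the weak limit of $w_n=(x+x_n)/\lambda_n$ is explicitly $x/\mu$ with $\eqnorm{x/\mu}=1/\mu$. The price is that your normalizing factors $\lambda_n$ vary with $n$, so you must verify that the $\norma$-separation of the $w_n$ stays strictly above $t$, which you correctly trace back to $\mu<2$; the paper's constant rescaling avoids that bookkeeping but requires the inductive construction of the separated sequence. Both proofs ultimately use the hypothesis $t<(2\gamma)^{-1}$ only through the bound ``$<2$'' on the normalizing factor.
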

\begin{proof} Let $(x_n)$ be a sequence in $B_{\eqnorma}$ weakly-convergent to $x$ with $\norm{x_n-x_m}>t$ when $n\neq m$. By extracting a subsequence, we may assume that $\norm{x_n-x}>t/2$. Fix $\varepsilon>0$ and let $Y$ be a finite-codimensional subspace of $X$ such that 
\[ \eqnorm{\frac{x}{\eqnorm{x}} + y }-1 \geq \ddelta_\eqnorma(2^{-1}\gamma^{-1}t) - \varepsilon \]
whenever $y\in Y$, $\eqnorm{y}\geq 2^{-1}\gamma^{-1}t$. Since $x_n-x\stackrel{w}{\to}0$, we can find $y_n\in Y$ such that $\norm{x_n-x-y_n}\to 0$. Clearly $\eqnorm{y_n}\geq 2^{-1}\gamma^{-1}t$ eventually. 
Thus 
\begin{align*}
\ddelta_\eqnorma(2^{-1}\gamma^{-1}t) - \varepsilon &\leq \liminf_{n\to\infty} \eqnorm{\frac{x}{\eqnorm{x}} + y_n} -1 \\
& = \liminf_{n\to\infty} \eqnorm{\frac{x}{\eqnorm{x}} + x_n-x} - 1\leq 1-\eqnorm{x}.
\end{align*}
It follows that $\ddelta_{\eqnorma}(2^{-1}\gamma^{-1}t)\leq \vvartheta_{\eqnorma}(t)$. 

Now, assume that $ \vvartheta_{\eqnorma}(t)>\alpha> \ddelta_{\eqnorma}(2\gamma t)$. Then there is $x\in X$ with $\eqnorm{x}=1$ such that $\inf_{y\in Y,\eqnorm{y}=2\gamma t} \eqnorm{x+y}<1+\alpha$ for every finite-codimensional space $Y$. Take $x^*\in X^*$ with $\eqnorm{x^*}=1$ and $x^*(x)=1$. One can easily construct by induction a sequence $(y_n)_{n=1}^\infty\subset \ker x^*$ such that $\eqnorm{y_n}= 2\gamma t$, $\eqnorm{y_n-y_m}\geq 2\gamma t$ if $n\neq m$ and $\eqnorm{x+y_n}<1+\alpha$. By reflexivity, we may assume that $y_n\stackrel{w}{\to} y$. Notice that 
\[\eqnorm{x+y}\geq x^*(x+y) = \lim_{n\to\infty} x^*(x+y_n)= 1.\]
On the other hand,
\[ \norm{\frac{x+y_n}{1+\alpha}-\frac{x+y_m}{1+\alpha}}\geq \frac{2t}{1+\alpha}> t\]
since $\alpha<\vvartheta_{\eqnorma}(t)\leq 1$, and so 
\[ \frac{1}{1+\alpha}\leq\eqnorm{\frac{x+y}{1+\alpha}} \leq 1-\vvartheta_{\eqnorma}(t).\]
Thus,
\[1\leq (1+\alpha)(1-\vartheta_{\eqnorma}(t)).\]Letting $\alpha$ go to $\vvartheta_{\eqnorma}(t)$ we get a contradiction. 
\end{proof}

The most important result about AUC renorming was proved in \cite{GKL}. For the statement we need the convex Szlenk index $\mbox{Cz}(B_X, t)$ that will be explained in detail in the next section. We just need to know now that $\mbox{Cz}(B_X, t)$ is a decreasing function defined for $t \in (0,1]$ and taking values in $[1,+\infty]$.

\begin{theo}[\cite{GKL} Theorem 4.7]\label{GLK-th}
Let $X$ be a separable reflexive Banach space with $\mbox{Cz}(B_X, 1) < +\infty$. Then there exists $1<C<19200$ such that for every $0<\tau<1$ there is a 2-equivalent norm $|\!|\!|.|\!|\!|_{\tau}$ on $X$ such that 
$$ \ddelta_{|\!|\!|.|\!|\!|_{\tau}}  (\tau) \geq \mbox{Cz}(B_X, \tau/C)^{-1} .$$  
\end{theo}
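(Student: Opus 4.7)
The plan is to imitate Lancien's approach to AUC renormings via iterates of the convex Szlenk derivation, constructing the new ball as an explicit combination of these iterates whose Minkowski gauge inherits AUC behavior from the slice-shrinking at each derivation level.

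First, fix a large universal constant $C>0$, to be adjusted at the end. Let $K_n := \Cz^n_{\tau/C}(B_X)$, so $B_X = K_0 \supset K_1 \supset \cdots$ is a decreasing sequence of weakly compact convex sets, with $K_N = \emptyset$ for $N := \Cz(B_X,\tau/C)$ (finite by hypothesis, since $\Cz(B_X,1)<\infty$ and separability propagates to every smaller level). I would take as candidate unit ball the Minkowski combination
\[ B_\tau \;=\; \sum_{n=0}^{N-1} a_n\, K_n, \]
with positive weights satisfying $a_0 \ge 1/2$ and $\sum_n a_n \le 2$, for instance $a_n = 2^{-n-1}$. Since $a_0 B_X \subset B_\tau \subset 2 B_X$, the Minkowski gauge $\eqnorma_\tau$ is automatically $2$-equivalent to $\norma$.

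To derive the AUC estimate I would pass to $\vvartheta_{\eqnorma_\tau}$ in place of $\ddelta_{\eqnorma_\tau}$ via the preceding proposition and argue by contradiction. Assume $\vvartheta_{\eqnorma_\tau}(\tau) < c/N$ for some small $c>0$; then there exist $x$ with $\eqnorm{x}_\tau = 1$ and a sequence $(x_m)$ weakly null, with $\eqnorm{x_m}_\tau\le\tau$ and $\eqnorm{x+x_m}_\tau \le 1 + c/N$. Write
\[ x + x_m \;=\; (1+c/N)\sum_{n=0}^{N-1} a_n\, w_{m,n}, \qquad w_{m,n}\in K_n, \]
and extract weakly convergent subsequences $w_{m,n}\to w_n$, so that $x = (1+c/N)\sum_n a_n w_n$ weakly. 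The crucial input is the defining property of $\Cz_{\tau/C}$: a sequence in $K_n$ whose successive differences exceed $\tau/C$ in norm has weak limit inside $K_{n+1}$, because such points cannot be isolated by any slice of diameter smaller than $\tau/C$. Because $(x_m)$ has $\eqnorma_\tau$-norm comparable to $\tau$ and is weakly null, a pigeonhole across the $N$ slots produces an index $n$ for which the oscillation of $(w_{m,n})_m$ is large enough to force $w_n \in K_{n+1}$. Iterating this displacement slot by slot eventually places mass inside $K_N = \emptyset$, the required contradiction, provided $C$ is taken large enough to absorb the accumulated constants.

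The principal obstacle is the quantitative matching: because $\eqnorma_\tau$ controls a weighted total over slots, converting a bound on that total into a pointwise bound on one individual summand $w_{m,n}$ costs a factor that must remain controlled through $N$ derivation steps. Reconciling these per-slot constants with the budget $\sum_n a_n \le 2$ and with the factor $2$ lost in the norm equivalence, while simultaneously keeping $\tau$ and $\tau/C$ coherent on both sides of the estimate, is exactly what forces the explicit universal value $C < 19200$.
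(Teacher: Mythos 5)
You should first note that the paper does not prove this statement at all: it is imported verbatim from \cite{GKL} (Theorem 4.7), and the paper only supplies the reduction of the nonseparable case to it via the gluing argument of Proposition~\ref{glue}. So the comparison can only be between your reconstruction and the known proofs in the literature (Lancien, Godefroy--Kalton--Lancien, Causey), all of which do start, as you do, from the iterates $K_n=c^n_{\tau/C}(B_X)$ and from the key fact that a weak limit of a $\tau/C$-separated sequence in $K_n$ lies in $s'_{\tau/C}(K_n)\subset K_{n+1}$. That part of your sketch is sound.

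The genuine gap is the step ``iterating this displacement slot by slot eventually places mass inside $K_N=\emptyset$.'' The contradiction hypothesis hands you a \emph{single} weakly null separated sequence, and your pigeonhole extracts from it a \emph{single} promotion: one index $n_0$ with $w_{n_0}\in K_{n_0+1}$. This is not a contradiction and there is nothing left to iterate with: since $K_{n_0+1}\subset K_{n_0}$, the point $x$ still lies comfortably in $(1+c/N)\sum_n a_nK_n$, and the sequence $(x_m)$ has been consumed. Running the promotion $N$ times would require $N$ nested separated arrays, which the definition of $\ddelta$ (or of $\vvartheta$) does not provide. The only way to close the argument with one sequence is to convert the single promotion into a quantitative drop of the gauge, i.e.\ to show that replacing the summand $a_{n_0}K_{n_0}$ by $a_{n_0}K_{n_0+1}$ shrinks the Minkowski sum into $(1-\eta)\sum_n a_nK_n$ with $\eta\gtrsim 1/N$; for a bare Minkowski sum of nested convex sets this is simply false (nested convex sets need not shrink proportionally, and the derivation may stall for many steps). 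This is precisely why the actual constructions do not take $\sum_n a_nK_n$ as the unit ball, but build the norm from gauges of sets of the form $\cconv\bigl(K_{n+1}\cup(1-\eta)K_n\bigr)$, or from weighted sums of (squared) distances to the $K_n$, engineered so that membership in a deeper derived set forces a definite norm decrease. A second, independent quantitative obstruction: with $a_n=2^{-n-1}$ the weight of the deepest slot is $2^{-N}$, so even granting a per-slot drop proportional to $a_{n_0}$ you would obtain a lower bound of order $2^{-N}$ rather than the claimed $N^{-1}=\Cz(B_X,\tau/C)^{-1}$. Neither defect is a matter of ``adjusting the constant $C$ at the end''; the construction of the ball itself has to change.
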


Since we are working without separability assumptions it would be desirable to have a nonseparable version of the previous result with the sharp estimation. That was achieved by Causey in \cite{causey}. However, we will provide an alternative proof showing that the general case can be reduced to the reflexive one with a simple ``gluing argument'' for AUC norms. Notice the similarities with the gluing argument for uniformly convex renormings of finite-dimensional subspaces \cite[Proposition~9.2]{banach}. Let $\SE(X)$ denote the lattice of separable subspaces of $X$.

\begin{prop}\label{glue}
Let $X$ be a Banach space and $\beta(t)\geq 0$ a function defined for $t \geq 0$ such that for all $S \in \SE(X) $ there is norm $\norma_S$ defined on $S$ which is 2-equivalent there to the original norm and satisfies that $\vvartheta_{\norma_S}(t) \geq \beta(t)$. Then there exists a 2-equivalent norm $\eqnorma$ on $X$ such that $\vvartheta_{\eqnorma}(t) \geq \beta(t/\lambda)$ for every $\lambda>1$.
\end{prop}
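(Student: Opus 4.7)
The plan is to glue the family $\{\norma_S\}_{S\in\SE(X)}$ via a $\limsup$ along the directed lattice of separable subspaces:
\[ \eqnorm{x} := \inf_{S_0\in\SE(X),\,S_0\ni x}\ \sup_{S\in\SE(X),\,S\supseteq S_0}\norm{x}_S. \]
The uniform bounds $\tfrac12\norm{y}\leq \norm{y}_S \leq 2\norm{y}$ on every $S$ pass directly to $\eqnorma$, giving $2$-equivalence with $\norma$. Positive homogeneity is immediate, and subadditivity follows from the general inequality $\limsup(a_S+b_S)\leq \limsup a_S+\limsup b_S$ applied to the cofinal subnet of $S$'s containing both $x$ and $y$; so $\eqnorma$ is a norm.

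For the AUC estimate, I would take $(x_n)\stackrel{w}{\to}x$ with $\eqnorm{x_n}\leq 1$ and $\inf_{n\ne m}\norm{x_n-x_m}>t$, and fix $\varepsilon>0$. By the definition of $\eqnorm{x_n}$, for each $n$ there is a separable $S_0^{(n)}\ni x_n$ such that $\norm{x_n}_S\leq 1+\varepsilon$ whenever $S\in\SE(X)$ contains $S_0^{(n)}$. The crucial observation is that the closed linear span
\[ S^\star := \overline{\operatorname{span}}\Bigl(\{x\}\cup\{x_n:n\geq 1\}\cup\textstyle\bigcup_n S_0^{(n)}\Bigr) \]
remains a single separable subspace of $X$, and any $S\in\SE(X)$ with $S\supseteq S^\star$ satisfies $\norm{x_n}_S\leq 1+\varepsilon$ for all $n$ at once. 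Inside $(S,\norma_S)$ the vectors $x_n/(1+\varepsilon)$ lie in $B_{\norma_S}$, tend weakly to $x/(1+\varepsilon)$, and have pairwise $\norma$-separation $>t/(1+\varepsilon)$. The hypothesis $\vvartheta_{\norma_S}\geq\beta$ then yields $\norm{x}_S\leq (1+\varepsilon)(1-\beta(t/(1+\varepsilon)))$, and choosing $S_0=S^\star$ in the defining infimum of $\eqnorma$ gives
\[ \eqnorm{x}\leq (1+\varepsilon)\bigl(1-\beta(t/(1+\varepsilon))\bigr) \qquad\text{for every } \varepsilon>0. \]

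Because $\vvartheta_{\norma_S}$ is automatically nondecreasing, one may assume WLOG that $\beta$ is nondecreasing (else replace it by its monotone envelope, which is still dominated by every $\vvartheta_{\norma_S}$). Letting $\varepsilon\to 0^+$ then gives $\eqnorm{x}\leq 1-\beta(t^-)$, and since $t/\lambda<t$ for every $\lambda>1$, we obtain $\eqnorm{x}\leq 1-\beta(t/\lambda)$ for every $\lambda>1$. Supremizing over admissible sequences in the definition of $\vvartheta_{\eqnorma}(t)$ delivers the desired inequality $\vvartheta_{\eqnorma}(t)\geq\beta(t/\lambda)$.

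The main obstacle to anticipate is the merging of the countably many ``enough room'' witnesses $S_0^{(n)}$, one for each index of the countable sequence $(x_n)$. This is what makes the $\limsup$ presentation preferable to a plain ultrafilter limit $\lim_{\mathcal U}\norm{x}_S$: under the latter one would only have sets $\{S:\norm{x_n}_S\leq 1+\varepsilon\}\in\mathcal U$, whose countable intersection need not lie in $\mathcal U$ in the absence of $\sigma$-completeness, and the AUC estimate on $\norma_S$ could not be invoked. The $\limsup$ description instead hands us concrete separable subspaces $S_0^{(n)}$, and countable closed spans stay safely inside $\SE(X)$.
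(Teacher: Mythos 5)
Your proposal is correct and follows essentially the same route as the paper: the same $\limsup$-type norm over the directed lattice of separable subspaces, the same merging of the countably many witnesses $S_0^{(n)}$ into a single separable closed span, and the same $\varepsilon\to 0$ conclusion. The only cosmetic difference is the bookkeeping of the loss factor: the paper fixes $\lambda>1$ first and chooses $1+\varepsilon<\lambda$ so that the $(1+\varepsilon)^{-1}$-rescaled sequence is already $t/\lambda$-separated and $\beta(t/\lambda)$ can be used directly without any monotonicity assumption on $\beta$, whereas you work with $\beta(t/(1+\varepsilon))$ and pass to the monotone envelope of $\beta$ before letting $\varepsilon\to 0$; both are fine.
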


\begin{proof}
Extend the function $\norma_S$ given by the hypothesis to all $X$ by setting $\| x \|_S=0$ if $x\not \in S$. Define now
$$ \eqnorm{x}_S = \sup\{ \|x\|_R: R \in \SE(X), S \subset  R\}. $$
It is clear that $\eqnorma_S$ is a 2-equivalent norm on $X$ for every $S \in \SE(X)$. Observe that the net $(\eqnorm{x}_S)_{S \in \SE(X)}$ converges to $\inf\{\eqnorm{x}_S: S \in \SE(X) \}$. Its limit defines a 2-equivalent norm on $X$
$$ \eqnorm{x}= \lim_{S \in \SE(X)}  \eqnorm{x}_S .$$
Fix $\lambda>1$ and $\varepsilon >0 $ such that $1+\varepsilon <\lambda$. Assume $t>0$ and take $(x_n) \subset B_{\eqnorma}$ a $t$-separated sequence which weakly converges to $x$. By the very definition of $\eqnorma$ we may take $S \in \SE(X)$ such that $ \eqnorm{x_n}_S \leq 1+\varepsilon$ for all $n \in {\mathbb N}$. Indeed, such separable subspace exists for every $x_n$ and then we can take the subspace generated by the union. Therefore $\|x_n\|_R \leq 1+\varepsilon$ for all $n \in {\mathbb N}$ and $R \in \SE(X)$ with $S \subset R$. Note that $(x_n/(1+\varepsilon))$ is a $t/\lambda$-separated sequence in $R$ and thus its limit satisfies that 
$$ \left\| \frac{x}{1+\varepsilon} \right\|_R \leq 1 - \beta(t/\lambda) $$ 
and so
$$ \eqnorm{ \frac{x}{1+\varepsilon} } \leq \eqnorm{ \frac{x}{1+\varepsilon} }_S \leq 1 - \beta(t/\lambda)  .$$
As $\varepsilon>0$ was arbitrary we deduce that $\vvartheta_{ \eqnorma}(t) \geq \beta(t/\lambda)$.
\end{proof}

Now we can establish the non-separable reflexive version of the result of Godefroy, Kalton and Lancien.

\begin{coro}\label{nonsepKGL}
Let $X$ be a reflexive Banach space such that $ \mbox{Cz}(B_X, 1) < +\infty$. Then there exists $1<C<19201$ such that for every $0<\tau<1$ there is a 2-equivalent norm $\eqnorma_{\tau}$ on $X$ such that 
$$ \vvartheta_{\eqnorma_{\tau}}  (\tau) \geq \mbox{Cz}(B_X, \tau/C)^{-1} .$$ 
\end{coro}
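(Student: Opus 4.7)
The plan is to reduce the non-separable statement to the separable reflexive case of Theorem~\ref{GLK-th} via the gluing construction of Proposition~\ref{glue}, bridging the gap between the $\ddelta$-estimate produced by the former and the $\vvartheta$-estimate required by the latter through the preceding Proposition relating $\ddelta_{\eqnorma}$ and $\vvartheta_{\eqnorma}$.

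I would first fix $\tau\in(0,1)$ and a parameter $\lambda>1$, to be taken close to $1$ at the end. For every separable subspace $S\in\SE(X)$, $S$ is itself reflexive, and the standard monotonicity of the convex Szlenk index under subspaces gives $\Cz(B_S,\cdot)\leq\Cz(B_X,\cdot)$; in particular $\Cz(B_S,1)<+\infty$. Applying Theorem~\ref{GLK-th} inside $S$ at the parameter $\tau/(4\lambda)$ then yields a $2$-equivalent norm $\norma_S$ on $S$ with
\[
\ddelta_{\norma_S}(\tau/(4\lambda))\geq\Cz(B_S,\tau/(4\lambda C))^{-1}\geq\Cz(B_X,\tau/(4\lambda C))^{-1},
\]
where $C$ denotes the GKL constant. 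Since $\norma_S$ is $2$-equivalent to the restriction of the ambient norm of $X$ to $S$, the preceding Proposition applied with $\gamma=2$ inside $S$ gives $\vvartheta_{\norma_S}(\tau/\lambda)\geq\ddelta_{\norma_S}(\tau/(4\lambda))$. Because $\vvartheta_{\norma_S}$ is nondecreasing in $t$, defining $\beta\colon(0,1]\to[0,\infty)$ by $\beta(s)=\Cz(B_X,\tau/(4\lambda C))^{-1}$ for $s\geq\tau/\lambda$ and $\beta(s)=0$ otherwise, I obtain $\vvartheta_{\norma_S}(s)\geq\beta(s)$ uniformly in $s\in(0,1]$ and in $S\in\SE(X)$.

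Feeding this uniform bound into Proposition~\ref{glue} produces a $2$-equivalent norm $\eqnorma_\tau$ on $X$ satisfying $\vvartheta_{\eqnorma_\tau}(t)\geq\beta(t/\mu)$ for every $\mu>1$; choosing $\mu=\lambda$ and $t=\tau$ returns $\vvartheta_{\eqnorma_\tau}(\tau)\geq\Cz(B_X,\tau/(4\lambda C))^{-1}$, which is the desired estimate up to the explicit absolute constant. The main obstacle is the bookkeeping of that constant: the $\ddelta\to\vvartheta$ passage costs a factor $1/(2\gamma)=1/4$, which is intrinsic to the weak-sequential argument when $\norma_S$ is merely $2$-equivalent to the ambient norm, while the gluing loss $\mu>1$ evaporates as $\lambda\to 1^+$. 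To reach the sharp bound $C<19201$ one needs a tighter version of the $\ddelta\to\vvartheta$ conversion in the separable reflexive regime --- where the UKK modulus of $\norma_S$ essentially coincides with $\ddelta_{\norma_S}$ --- or equivalently a $\vvartheta$-form of Theorem~\ref{GLK-th}; otherwise the present scheme only yields an absolute constant of order $4C$.
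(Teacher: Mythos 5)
Your proposal follows the same route as the paper: apply Theorem~\ref{GLK-th} on each separable subspace (which is reflexive, with $\Cz(B_S,\cdot)\leq\Cz(B_X,\cdot)$ by monotonicity) and glue via Proposition~\ref{glue}. In fact your bookkeeping is more careful than the paper's, whose entire proof is to set $\beta(t)=\Cz(B_X,\tau/C')^{-1}$ for $t\geq\tau$ and $\beta(t)=0$ otherwise, take $\lambda=19201/19200$ and $C=\lambda C'$ --- that is, it feeds the $\ddelta$-conclusion of Theorem~\ref{GLK-th} directly into Proposition~\ref{glue} (which requires a $\vvartheta$-estimate on each $S$) without any conversion, so it pays only the gluing factor $\lambda$. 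Your observation that a literal application of the displayed Proposition relating $\ddelta_{\eqnorma}$ and $\vvartheta_{\eqnorma}$ with $\gamma=2$ costs an extra factor $2\gamma=4$ and therefore only yields a constant of order $4C'$ is accurate; the paper's sharp bound $C<19201$ implicitly relies on the fact that the construction in \cite{GKL} really produces a UKK-type (i.e.\ $\vvartheta$-type) estimate from which the $\ddelta$-statement is derived, exactly the ``$\vvartheta$-form of Theorem~\ref{GLK-th}'' you say would be needed. Your placement of the jump of $\beta$ at $\tau/\lambda$ rather than at $\tau$ is also the correct fix: with the paper's $\beta$, evaluating the conclusion of Proposition~\ref{glue} at $t=\tau$ gives only $\beta(\tau/\lambda)=0$, and one must either shift the jump as you do or evaluate at $t=\lambda\tau$. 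So the argument is the intended one; the only caveat is the numerical constant, and there your criticism applies to the paper's own write-up as much as to your proof.
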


\begin{proof} 
Take $\beta(t) = 0$ if $t < \tau$ and $\beta(t)=\mbox{Cz}(B_X, \tau/C')^{-1}$ otherwise with $C'$ the constant given by Theorem~\ref{GLK-th}. Then set $\lambda=19201/19200$ and set $C=\lambda C'$.
\end{proof}

According to the results from \cite{raja2} we have
\[ {\mathfrak N}_X(t) \sim
( \sup \{ \vartheta_{\eqnorma}(t): \eqnorma \text{ is}~2\text{-equivalent to}~ \norma \} )^{-1}.\]
Actually we will use the equivalence above as a definition for the original construction of ${\mathfrak N}_X(t)$
is quite technical and leads to a property that we will not use here. However, we will need the ``asymptotic'' version of ${\mathfrak N}_X(t)$ whose follows the formula above.

\begin{defi} Let $X$ be a Banach space admiting an equivalent AUC norm. We define
\[ \overline{{\mathfrak N}}_X(t) = 
( \sup \{ \vvartheta_{\eqnorma}(t): \eqnorma \text{ is}~2\text{-equivalent to}~ \norma )^{-1}.\]
\end{defi}

\begin{coro}\label{NCZ}
Let $X$ be a reflexive space such that $\Sz(B_X) \leq \omega$, then there exists a constant $c>0$ such that 
\[ \overline{{\mathfrak N}}_X(t) \leq \Cz(B_X, c \, t) \]
for every $t\in (0,c]$.
\end{coro}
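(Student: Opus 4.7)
The plan is to derive the inequality as a direct consequence of Corollary \ref{nonsepKGL} together with the very definition of $\overline{{\mathfrak N}}_X$. The hypothesis $\Sz(B_X) \leq \omega$ means that every Szlenk derivation of $B_X$ terminates in finitely many steps for each slicing parameter, and in particular $\Cz(B_X,1)<+\infty$. Thus $X$ falls under the scope of Corollary~\ref{nonsepKGL}, which produces a universal constant $C\in (1,19201)$ such that for each $\tau\in (0,1)$ there is a $2$-equivalent norm $\eqnorma_\tau$ on $X$ satisfying
\[ \vvartheta_{\eqnorma_\tau}(\tau)\geq \Cz(B_X,\tau/C)^{-1}. \]

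The next step is simply to observe that $\overline{{\mathfrak N}}_X(\tau)^{-1}$ is, by definition, the supremum of $\vvartheta_{\eqnorma}(\tau)$ over \emph{all} norms $\eqnorma$ that are $2$-equivalent to $\norma$. Consequently, the particular norm $\eqnorma_\tau$ already witnesses
\[ \overline{{\mathfrak N}}_X(\tau)^{-1} \;\geq\; \vvartheta_{\eqnorma_\tau}(\tau)\;\geq\; \Cz(B_X,\tau/C)^{-1}, \]
and taking reciprocals yields $\overline{{\mathfrak N}}_X(\tau)\leq \Cz(B_X,\tau/C)$ for every $\tau\in(0,1)$. Setting $c:=1/C\in(0,1)$ gives the claim (in fact for all $\tau\in (0,1)$, which is slightly stronger than the stated range $\tau\in (0,c]$).

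There is essentially no obstacle once the bookkeeping is right: the whole corollary is a packaging of Corollary~\ref{nonsepKGL} through the definition of $\overline{{\mathfrak N}}_X$. The only point worth being careful about is the implication $\Sz(B_X)\leq \omega \Longrightarrow \Cz(B_X,1)<+\infty$ that allows us to invoke Corollary~\ref{nonsepKGL}; this is standard from the comparability of the Szlenk index with its convex variant in reflexive spaces, and would be addressed in one short sentence (or a reference) before applying the renorming theorem.
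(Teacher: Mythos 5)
Your argument is correct and is essentially identical to the paper's own proof: both apply Corollary~\ref{nonsepKGL} to produce, for each $t$, a $2$-equivalent norm $\eqnorma_t$ with $\vvartheta_{\eqnorma_t}(t)\geq \Cz(B_X,t/C)^{-1}$ and then conclude directly from the definition of $\overline{{\mathfrak N}}_X$ as an infimum/supremum over $2$-equivalent norms. Your extra remark on why $\Sz(B_X)\leq\omega$ gives $\Cz(B_X,1)<+\infty$ is a point the paper only records later (in Section~3), but it is the standard equivalence and does not change the argument.
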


\begin{proof}
There exists a universal constant $C>0$ such that for every $0<t<1$ there is an $2$-equivalent norm $\eqnorma_t$ such that $\overline{\vartheta}_{\eqnorma_t}(t)\geq \Cz(B_X,  t/C)^{-1}$ following that $\overline{{\mathfrak N}}_X(t) \leq \overline{\vartheta}_{\eqnorma_t}(t)^{-1} \leq \Cz(B_X, t/C)$.
\end{proof}


\section{Szlenk indices and renormings of $L^2(X)$}

We will start this section by reminding the ordinal indices associated to renorming of Banach spaces. Again we will assume that $X$ is a reflexive Banach space. In particular, we will give the definition of the function $\Cz(B_X,t)$ used in some statements of the previous section. We will consider the following set derivations:
\begin{align*} s'_t(A) &= \{ x\in A :  \forall U\subset X \text{ weak open } (x\in  U \Rightarrow \diam(A \cap U) \geq t)\},\\
c'_t(A) &= \cconv(s'_t(A)),\\
d'_t(A) &= \{ x\in A : \forall H\subset X \text{ open halfspace } (x\in  H \Rightarrow \diam(A \cap H) \geq t)\}
\end{align*}
where $t > 0$ and $A$ is a bounded subset of $X$. The names are {\it Szlenk}, {\it convex Szlenk} and {\it dentability} derivations respectively.
Clearly, the set derivations defined above are monotone. Thus, they can be iterated:
\[ s^{n+1}_t(A) = s^{'}_t(s^n_t(A)), \quad c^{n+1}_t(A) = c^{'}_t(c^n_t(A)),\quad d^{n+1}_t(A) = d^{'}_t(d^n_t(A))\]
We define 
\begin{align*}
\Sz(B_X,t)&=\min\left\{n: s^n_t(B_X) = \emptyset\right\},\\ \Cz(B_X,t)&=\min\left\{n: c^n_t(B_X) = \emptyset\right\},\\ \Dz(B_X,t)&=\min\left\{n: d^n_t(B_X) = \emptyset\right\}.
\end{align*}
These quantities are well defined and finite when $X$ is super-reflexive. Indeed, this follows from the existence of a uniformly convex renorming of $X$ and the following well-known facts:
\begin{itemize}
	\item[(i)] If $\eqnorma$ is $\gamma$-equivalent to the norm of $X$ then
	\[ s^n_{\gamma^2 t}(B_X) \subset \gamma s^n_{t}(B_\eqnorma) \]
	for every $n\in \mathbb N$, and so 
	\[ \Sz(B_X,\gamma^2 t)\leq \Sz(B_\eqnorma, t) , \]
	and analogous statements for the convex Szlenk and the dentability derivations and indices hold.
	\item[(ii)] $\Sz(B_X,t)\leq \Cz(B_X,t ) \leq \Dz(B_X,t)$. 
	\item[(iii)] $ \Dz(B_X,t)\leq \delta_X(t)^{-1} +1$; $\Cz(B_X,t ) \leq \ddelta_X(t/2)^{-1} +1.$  
\end{itemize}

The factor $1/2$ in the last estimation could be skipped with an alternative and ``more topological'' definition of the modulus $\ddelta$. Note also that $\Sz(B_X, 1) < +\infty$ and $\Cz(B_X, 1) < +\infty$ are equivalent statements, whereas $\Dz(B_X, 1) < +\infty$ is equivalent to super-reflexivity.\\

The next result combined with Corollary~\ref{NCZ} shows that $\overline{{\mathfrak N}}_X(t)$ is actually equivalent to the convex Szlenk index.

\begin{coro}\label{th:CzN}
Let $X$ be a reflexive space such that $\Sz(B_X, 1) < +\infty$, then \[ \Cz(B_X, 32t) \leq 2 \, \overline{{\mathfrak N}}_X(t). \]
\end{coro}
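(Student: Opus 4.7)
The plan is to chain together three of the estimates already available: the comparison between $\vvartheta_\eqnorma$ and $\ddelta_\eqnorma$ from the proposition in Section~2, the bound $\Cz(B_X,t)\leq \ddelta_X(t/2)^{-1}+1$ from fact~(iii) in the bulleted list, and the propagation of $\Cz$ under an equivalent renorming given by fact~(i). Fix a norm $\eqnorma$ that is 2-equivalent to $\norma$ and set $\alpha=\vvartheta_\eqnorma(t)$. We may assume $\alpha>0$, since otherwise $2\vvartheta_\eqnorma(t)^{-1}=+\infty$ and the bound to be proved is vacuous for this choice of $\eqnorma$.

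Applying the proposition with $\gamma=2$ gives $\alpha\leq \ddelta_\eqnorma(4t)$, and hence $\ddelta_\eqnorma(4t)^{-1}\leq \alpha^{-1}$. Fact~(iii) applied to the renormed space $(X,\eqnorma)$ at parameter $8t$ then yields
\[ \Cz(B_\eqnorma,8t)\leq \ddelta_\eqnorma(4t)^{-1}+1\leq \alpha^{-1}+1. \]
On the other hand, fact~(i) with $\gamma=2$ gives $\Cz(B_X,32t)\leq \Cz(B_\eqnorma,8t)$. Combining these, $\Cz(B_X,32t)\leq \alpha^{-1}+1$, and since $\alpha\leq 1$ (by the very definition of $\vvartheta$) this is at most $2\alpha^{-1}$. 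Taking the infimum of the right-hand side over all norms $\eqnorma$ that are 2-equivalent to $\norma$---equivalently, taking the supremum in the denominator---then produces
\[ \Cz(B_X,32t)\leq 2\bigl(\sup\{\vvartheta_\eqnorma(t):\eqnorma\text{ is 2-equivalent to }\norma\}\bigr)^{-1}=2\,\overline{{\mathfrak N}}_X(t). \]

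There is no real obstacle; the argument is essentially constant-tracking. The only care needed is the scaling: the factor $\gamma^2=4$ from fact~(i) and the factor $2\gamma=4$ from the modulus comparison combine to give $32=4\cdot 8$. The range restriction $0<t<(2\gamma)^{-1}=1/4$ appearing in the proposition poses no issue for the remaining $t$: when $t\geq 1/4$ one has $32t\geq 8>\diam(B_X)=2$, so $\Cz(B_X,32t)=1$ and the conclusion is automatic.
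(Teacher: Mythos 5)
Your argument is correct and is essentially the paper's own proof: both chain the comparison $\vvartheta_{\eqnorma}\leq\ddelta_{\eqnorma}$ from the Section~2 proposition with fact~(iii) and the renorming fact~(i), then optimize over $2$-equivalent norms; the only differences are the order in which the three estimates are composed, where the additive $+1$ is absorbed into the factor $2$ (you use $\alpha^{-1}+1\leq 2\alpha^{-1}$, the paper uses $\Cz-1\geq\frac{1}{2}\Cz$), and your slightly more careful treatment of the range $t\geq 1/4$.
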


\begin{proof}
For any  $2$-equivalent $\eqnorma$ and every $0<t<1$ we have
\begin{align*}
2^{-1}\Cz(B_X, 4t)\leq \Cz(B_{\eqnorma},t)-1 \leq \ddelta_{\eqnorma}(t/2)^{-1} \leq \vvartheta_{\eqnorma}(t/8)^{-1}.
\end{align*}
The proof finishes taking $2$-equivalent norms  $\eqnorma$ such that $\vvartheta_{\eqnorma}(t/8)^{-1}$ approaches $\overline{{\mathfrak N}}_X(t/8)$ and  the obvious change of the variable.
\end{proof}

\begin{rema}
We could show that $\overline{{\mathfrak N}}_X(t)$ is equivalent to a submultiplicative function, as ${\mathfrak N}_X(t)$ is. Therefore $\overline{{\mathfrak N}}_X(t) \sim \Cz(B_X,t)$. We skip the proof as we will apply these results in spaces $L^2(X)$ where the submultiplicativity is given in an easy way.
\end{rema}

It was shown in \cite{GKL} that $\Sz(B_X, t)$ is equivalent to $\Cz(B_X,t)$ if $X$ is super-reflexive. We only need the following particular case of that result. 

\begin{prop}\label{prop:eqL2}
If $X$ is super-reflexive, then the functions $\Sz(B_{L^2(X)},t)$,
$\Cz(B_{L^2(X)},t)$ and $\Dz(B_{L^2(X)},t)$ are equivalent.
\end{prop}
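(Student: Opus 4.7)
My plan is to prove the two nontrivial reverse inequalities, since by item (ii) in the hierarchy we automatically have $\Sz(B_{L^2(X)},t)\leq \Cz(B_{L^2(X)},t)\leq \Dz(B_{L^2(X)},t)$. The equivalence $\Sz(B_{L^2(X)},t)\sim \Cz(B_{L^2(X)},t)$ is immediate from the super-reflexive Szlenk--convex-Szlenk theorem of Godefroy--Kalton--Lancien invoked in the paragraph just above, applied to $L^2(X)$, which is super-reflexive because $X$ is. The remaining task is therefore
\[ \Dz(B_{L^2(X)},t)\ \preceq\ \Cz(B_{L^2(X)},\alpha t) \quad\text{for some } \alpha>0. \]

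For this estimate my plan is to exploit the Fubini isometry $L^2(X)\cong L^2([0,1];L^2(X))$ in order to manufacture a uniformly convex renorming of $L^2(X)$ from an asymptotically uniformly convex one. Given $\tau\in(0,1)$, Corollary~\ref{nonsepKGL} provides a $2$-equivalent norm $\eqnorma$ on $L^2(X)$ with $\vvartheta_{\eqnorma}(\tau)\geq \Cz(B_{L^2(X)},\tau/C)^{-1}$. Transporting $\eqnorma$ through the Fubini identification, I would define on $L^2([0,1];L^2(X))\cong L^2(X)$ the ``$L^2$-square'' norm
\[ |\!|\!|f|\!|\!|^2\ =\ \int_0^1 \eqnorm{f(s)}^2\,ds, \]
which, via the isometry, is again $2$-equivalent to the original norm of $L^2(X)$.

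The main obstacle will be verifying that $|\!|\!|\cdot|\!|\!|$ is uniformly convex with $\delta_{|\!|\!|\cdot|\!|\!|}(t)\succeq \vvartheta_{\eqnorma}(t)$ in a suitable range of $t$. The heuristic is that although $\vvartheta_{\eqnorma}$ is only a ``limiting'' quantity over weakly null separated sequences, integrating the square of the inner norm against $ds$ should convert the UKK-type inequality into genuine pointwise control of the averaged norm: when $|\!|\!|f|\!|\!|=|\!|\!|g|\!|\!|=1$ and $|\!|\!|f-g|\!|\!|>t$, decomposing $[0,1]$ according to whether $\eqnorm{f(s)-g(s)}$ exceeds a threshold and applying the AUC bound on the ``separated'' part, together with the strict convexity of the square function on the remainder, should force $|\!|\!|f+g|\!|\!|^2$ to fall below $4$ by a margin controlled by $\vvartheta_{\eqnorma}(t)$. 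This is where the bulk of the technical work lies, and a careful truncation/martingale-style argument may be needed to handle the interaction between the separated and non-separated slices.

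Granting this, property (iii) yields $\Dz(B_{L^2(X)}^{|\!|\!|\cdot|\!|\!|},t)\leq \delta_{|\!|\!|\cdot|\!|\!|}(t)^{-1}+1\preceq \vvartheta_{\eqnorma}(t)^{-1}\leq \Cz(B_{L^2(X)},\tau/C)$ for $t$ of order $\tau$, and property (i) transfers this Dz-estimate back to the original unit ball $B_{L^2(X)}$ at the cost of a fixed multiplicative shift of the variable, closing the argument and establishing the claimed equivalence of all three indices.
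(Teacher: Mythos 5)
Your reduction to $\Dz(B_{L^2(X)},t)\preceq \Cz(B_{L^2(X)},\alpha t)$ and your instinct to exploit the isometry $L^2(X)\cong L^2([0,1];L^2(X))$ are both exactly right --- this Fubini bootstrap is the engine of the paper's proof as well. But the step you defer as ``the bulk of the technical work'' is not merely technical: it is false as stated. The Bochner-type norm $\eqnorm{f}^2=\int_0^1\norm{f(s)}_\ast^2\,ds$ built from an AUC (but not UC) fiber norm $\norma_\ast$ is in general \emph{not} uniformly convex, and its modulus cannot be bounded below by $\vvartheta_{\norma_\ast}$. To see the obstruction, take $f\equiv x$ and $g\equiv y$ constant with $\norm{x}_\ast=\norm{y}_\ast=1$ and $\norm{x-y}_\ast>t$: then $\eqnorm{f}=\eqnorm{g}=1$ and $\eqnorm{f-g}>t$, yet $\eqnorm{f+g}=\norm{x+y}_\ast$, and an asymptotic modulus says nothing about two fixed far-apart points --- there is no weakly null separated sequence anywhere in this picture. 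Your proposed decomposition of $[0,1]$ into ``separated'' and ``non-separated'' slices does not help, because on the separated slices you would still need genuine convexity of $\norma_\ast$ to beat the triangle inequality, and on the remainder the strict convexity of $u\mapsto u^2$ only controls the scalars $\norm{f(s)}_\ast$, $\norm{g(s)}_\ast$, not $\norm{f(s)+g(s)}_\ast$. (Indeed, if $\norma_\ast$ is AUC but not strictly convex, your norm is not even strictly convex.)

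The correct way to convert asymptotic convexity of the ambient space into genuine convexity of the fibers is different: one first makes the AUC norm on $L^2(Y)$ \emph{balanced} (Lemma~\ref{lemma:balanced}), restricts it to $Y$ embedded as constant functions, and then, given $x,y\in B_Y$ with $\norm{x-y}\geq t$, uses the functions $f_n$ alternating between $x$ and $y$ on the dyadic intervals of level $n$: these lie in the unit ball, are $t/2$-separated, and converge weakly to the constant $(x+y)/2$, so the AUC modulus of the \emph{big} space applies and yields $\norm{(x+y)/2}\leq 1-\vvartheta(t/2)$. That is precisely Theorem~\ref{th:renorm}. The paper's actual proof of Proposition~\ref{prop:eqL2} bypasses the renorming detour entirely by quoting Lancien's inequality $\Dz(B_Y,t)\leq \Sz(B_{L^2(Y)},t/2)$ for arbitrary $Y$, applying it to $Y=L^2(X)$, using $L^2(L^2(X))\cong L^2(X)$, and absorbing the change of variable $t\mapsto t/2$ via the submultiplicativity of the Szlenk index. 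A side remark: your appeal to the Godefroy--Kalton--Lancien equivalence $\Sz\sim\Cz$ is unnecessary, since item (ii) already gives $\Sz\leq\Cz\leq\Dz$ and the only inequality in doubt is $\Dz\preceq\Sz$.
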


\begin{proof} Clearly it suffices to show that $\Dz(B_{L^2(X)},t)\preceq \Sz(B_{L^2(X)},t)$.
For any Banach space $Y$ we have $\Dz(B_Y,t) \leq \Sz(B_{L^2(Y)},t/2)$, see \cite{Lancien3, Lancien2}. In particular, we have
\[ \Dz(B_{L^2(X)},t) \leq \Sz(B_{L^2(L^2(X))},t/2). \]
As $L^2(L^2(X))$ is isometric to $L^2(X)$, we have 
\[ \Dz(B_{L^2(X)},t) \leq \Sz(B_{L^2(X)},t/2) \leq c \Sz(B_{L^2(X)},t) \]
where $c=Sz(B_{L^2(X)},1/2)$ by the submultiplicativity of the Szlenk index.
\end{proof}

\begin{coro}
If $X$ is a super-reflexive Banach space then $\overline{{\mathfrak N}}_{L^2(X)}(t)$ is equivalent to $\Sz(B_{L^2(X)}, t)$ and so it is equivalent to a submultiplicative function.
\end{coro}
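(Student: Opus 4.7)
The plan is to simply stitch together the three preceding results applied to the space $L^2(X)$. First I would verify that the hypotheses of Corollaries~\ref{NCZ} and \ref{th:CzN} are met for $Y=L^2(X)$: since $X$ is super-reflexive, so is $L^2(X)$ (a classical fact, as super-reflexivity passes to $L^p$ with $1<p<\infty$), hence $L^2(X)$ admits an equivalent UC norm and in particular $\Sz(B_{L^2(X)},1) \le \Dz(B_{L^2(X)},1)<+\infty$, so both $\Sz(B_{L^2(X)})\le\omega$ and $\Sz(B_{L^2(X)},1)<+\infty$ hold.

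Next I would invoke Corollary~\ref{NCZ} applied to $L^2(X)$ to get a constant $c>0$ such that
\[ \overline{{\mathfrak N}}_{L^2(X)}(t)\le \Cz(B_{L^2(X)},ct) \quad\text{for small }t, \]
and Corollary~\ref{th:CzN} applied to $L^2(X)$ to get
\[ \Cz(B_{L^2(X)},32t)\le 2\,\overline{{\mathfrak N}}_{L^2(X)}(t). \]
Together these two inequalities yield $\overline{{\mathfrak N}}_{L^2(X)}(t)\sim \Cz(B_{L^2(X)},t)$.

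Finally, Proposition~\ref{prop:eqL2} gives $\Cz(B_{L^2(X)},t)\sim \Sz(B_{L^2(X)},t)$, so by transitivity of $\sim$ we conclude $\overline{{\mathfrak N}}_{L^2(X)}(t)\sim \Sz(B_{L^2(X)},t)$. The submultiplicativity of $\Sz(B_{L^2(X)},t)$ is the standard fact already invoked in the proof of Proposition~\ref{prop:eqL2} (namely $\Sz(B_Y,st)\le \Sz(B_Y,s)\Sz(B_Y,t)$ for super-reflexive $Y$), and any function equivalent to a submultiplicative one is equivalent to a submultiplicative function, which settles the second assertion.

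No real obstacle is anticipated: the content of the corollary is just the combination of the preceding three results with the observation that super-reflexivity is inherited by $L^2(X)$. The only point worth stating carefully is that the change of variable in the two inequalities from Corollaries~\ref{NCZ} and \ref{th:CzN} only affects the equivalence constants, not the equivalence class of the function.
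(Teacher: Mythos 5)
Your proposal is correct and follows exactly the paper's own proof, which likewise combines Proposition~\ref{prop:eqL2} with Corollaries~\ref{NCZ} and \ref{th:CzN} and the fact that $L^2(X)$ is super-reflexive whenever $X$ is. Your additional remarks on verifying the hypotheses and on the change of variable only affecting equivalence constants are accurate but not essential.
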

\begin{proof}
Combine the previous proposition with Corollary~\ref{NCZ}, Corollary~\ref{th:CzN} and the fact that $L^2(X)$ is super-reflexive whenever $X$ is, see \cite[Proposition~11.39]{Pisier_libro} for instance. 
\end{proof}

\begin{theo}\label{th:renorm}
Let $X$ a Banach space, then for every $\gamma$-equivalent norm\linebreak $\norma_{L^2(X)}$ there exists a $\gamma$-equivalent norm $\norma_X$ such that
\[ \vartheta_{\norma_X}(t) \geq \overline{\vartheta}_{\norma_{L^2(X)}}(t/2). \]
\end{theo}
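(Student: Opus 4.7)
The plan is to identify $X$ isometrically with the constant functions in $L^{2}(X)$ via $x \mapsto \chi_{[0,1]}\, x$, and to define
\[ \norm{x}_{X} := \norm{\chi_{[0,1]}\, x}_{L^{2}(X)}. \]
Since the standard Bochner norm of $\chi_{[0,1]}\, x$ equals the given norm of $x$ and $\norma_{L^{2}(X)}$ is $\gamma$-equivalent to the Bochner norm, the norm $\norma_{X}$ is $\gamma$-equivalent to the given norm on $X$.

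Fix $x, y \in X$ with $\norm{x}_{X} = \norm{y}_{X} = 1$ and original-norm separation exceeding $t$. To bound $\norm{(x+y)/2}_{X} = \norm{\chi_{[0,1]}\,(x+y)/2}_{L^{2}(X)}$ via $\overline{\vartheta}_{\norma_{L^{2}(X)}}(t/2)$, I will realize $\chi_{[0,1]}\,(x+y)/2$ as the weak limit of a sequence in the unit ball of $\norma_{L^{2}(X)}$ whose terms are pairwise $(t/2)$-separated in the standard Bochner norm. With $(r_{n})$ the Rademacher functions and $A_{n} := \{r_{n} = +1\}$, the natural candidate is
\[ f_{n} := \chi_{[0,1]}\cdot \tfrac{x+y}{2} + r_{n}\cdot \tfrac{x-y}{2} = \chi_{A_{n}}\, x + \chi_{A_{n}^{c}}\, y. \]
Weak convergence $f_{n} \rightharpoonup \chi_{[0,1]}\,(x+y)/2$ in $L^{2}(X)$ follows from $r_{n} \rightharpoonup 0$ in $L^{2}$, and orthogonality of distinct Rademachers yields $\norm{f_{n} - f_{m}}_{L^{2}(X)} = \norm{x-y}/\sqrt{2} > t/\sqrt{2} > t/2$ for the standard Bochner norm. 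Once $\norm{f_{n}}_{L^{2}(X)} \leq 1$ is known, the definition of $\overline{\vartheta}_{\norma_{L^{2}(X)}}(t/2)$ applied to $(f_{n})$ gives $\norm{\chi_{[0,1]}\,(x+y)/2}_{L^{2}(X)} \leq 1 - \overline{\vartheta}_{\norma_{L^{2}(X)}}(t/2)$, and hence $\vartheta_{\norma_{X}}(t) \geq \overline{\vartheta}_{\norma_{L^{2}(X)}}(t/2)$.

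The crucial point is the bound $\norm{f_{n}}_{L^{2}(X)} \leq 1$. For the standard Bochner norm it is immediate, since $\norm{f_{n}}^{2} = (\norm{x}^{2}+\norm{y}^{2})/2 \leq 1$. In general, I plan to first replace $\norma_{L^{2}(X)}$ by its measure-preserving symmetrization
\[ \norm{f}' := \sup\{\norm{f\circ T}_{L^{2}(X)} : T \text{ measure-preserving on } [0,1]\}, \]
which remains $\gamma$-equivalent to the Bochner norm, fixes the constants ($\norm{\chi_{[0,1]}\, x}' = \norm{\chi_{[0,1]}\, x}_{L^{2}(X)}$), and satisfies $\overline{\vartheta}_{\norma'} \geq \overline{\vartheta}_{\norma_{L^{2}(X)}}$ by applying the definition of $\overline{\vartheta}$ to each rearranged sequence $(f_{n}\circ T)$. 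Under $\norma'$ the value $\norm{f_{n}}'$ depends only on the distribution of $f_{n}$, so the remaining task is to prove the rearrangement-invariant lattice bound $\norm{f_{n}}' \leq \max(\norm{\chi_{[0,1]}\, x}', \norm{\chi_{[0,1]}\, y}') = 1$. This last inequality is the main obstacle: it must be extracted from the symmetry of $\norma'$ and its equivalence with the Bochner norm, without assuming any a priori lattice structure on $\norma_{L^{2}(X)}$.
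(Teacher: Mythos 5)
Your skeleton coincides with the paper's: you restrict the given norm to the constant functions, you use exactly the same test sequence $f_n=\chi_{A_n}x+\chi_{A_n^c}y$ (the paper describes it as the function alternating between $x$ and $y$ on the dyadic intervals of length $2^{-n}$), and the weak convergence and $t/2$-separation computations are correct. However, the step you yourself flag as ``the main obstacle'' --- the bound $\norm{f_n}\leq 1$ for the equivalent norm --- is the whole content of the theorem, it is not established in your write-up, and the symmetrization you propose cannot establish it. Invariance under measure-preserving transformations of $[0,1]$ does not imply the lattice-type inequality $\norm{\chi_{A}x+\chi_{A^c}y}'\leq\max(\norm{\chi_{[0,1]}x}',\norm{\chi_{[0,1]}y}')$. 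For a concrete obstruction take $X=\mathbb{R}$ and
\[ \norm{f}'=\max\Bigl(\norm{f}_{L^2},\tfrac32\bigl\lVert f-\textstyle\int_0^1 f\bigr\rVert_{L^2}\Bigr), \]
which is rearrangement invariant (hence a fixed point of your symmetrization), $\tfrac32$-equivalent to the Bochner norm, and equal to $|x|$ on constants; yet for $x=1$, $y=-1$ one gets $f_n=r_n$ and $\norm{r_n}'=\tfrac32>1=\max(\norm{\chi_{[0,1]}x}',\norm{\chi_{[0,1]}y}')$. So after your symmetrization the sequence $(f_n)$ need not lie in the unit ball and the definition of $\overline{\vartheta}_{\norma'}$ cannot be applied to it.

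The paper closes exactly this gap with a different preprocessing (Lemma~\ref{lemma:balanced}): it passes to a \emph{balanced} norm built from the affine compressions onto subintervals, $\norm{f}_I=|I|^{-1}\norm{(f\circ\phi_I)\chi_I}_1$ and $\norm{f}_2=\sup_I\norm{f}_I$. The point of that construction is the contraction property $\norm{\chi_I z}_2\leq |I|\,\norm{\chi_{[0,1]}z}_2$ for every constant $z$ and every interval $I$, whence $f_n=\sum_j\chi_{I_j}z_j$ with $z_j\in\{x,y\}$ satisfies $\norm{f_n}_2\leq\sum_j|I_j|\cdot 1=1$ by the triangle inequality: $f_n$ is literally a convex combination of compressed copies of the two constant functions. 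Measure-preserving rearrangements preserve distributions but never allow you to compare a two-valued function with the constants, which is why your symmetry group is the wrong one; the compression semigroup is what makes that comparison possible. If you replace your $\norm{\cdot}'$ by the balanced norm and check, as the paper does, that this passage does not decrease $\overline{\vartheta}$, the rest of your argument goes through verbatim.
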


We say that an equivalent norm $\eqnorma$ on $L^2(X)$ is \emph{balanced} if 
\[  |I|^{-1} \eqnorm{( f \circ \phi_I ) \chi_I} \leq \eqnorm{f} \]
for every non-trivial interval $I \subset [0,1]$ where $\phi_I: I \rightarrow [0,1]$ the unique affine increasing bijection between those intervals. 

\begin{lema}\label{lemma:balanced}
Given an equivalent norm $\| \cdot \|_1$ on $L^2(X)$, there exists a balanced 
norm $\| \cdot \|_2$ with the same equivalence constants such that
\[ \overline{\vartheta}_{\norma_2}(t) \geq
\overline{\vartheta}_{\norma_1}(t). \] 
\end{lema}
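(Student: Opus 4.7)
The plan is to construct $\norma_2$ as the supremum, over non-trivial subintervals $I \subset [0,1]$, of appropriately rescaled copies of $f$ measured in $\norma_1$. For each such interval $I$, let $T_I : L^2(X) \to L^2(X)$ be the linear operator sending $f$ to a suitably scaled multiple of $(f \circ \phi_I) \chi_I$, with the scaling chosen exactly so that $T_I$ is an isometry of $(L^2(X), \norma_{L^2(X)})$ and so that the inequality $\norm{T_I f}_2 \leq \norm{f}_2$ becomes the balanced condition. Define
\[ \norm{f}_2 := \sup_{I \subset [0,1]} \norm{T_I f}_1. \]

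First, I would check that $\norma_2$ is a norm sitting in the same interval of equivalence as $\norma_1$. The upper estimate $\norm{f}_2 \leq C \norm{f}_{L^2(X)}$ follows since each $T_I$ is an $L^2$-isometry and $\norm{\cdot}_1 \leq C \norm{\cdot}_{L^2(X)}$; the lower estimate $\norm{f}_2 \geq \norm{f}_1$ comes from specializing to $I = [0,1]$, for which $T_I$ is the identity. Hence $\norma_2$ enjoys exactly the same equivalence constants with respect to $\norma_{L^2(X)}$ as $\norma_1$.

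Next, for the balanced condition I would establish the semigroup-like composition rule $T_I \circ T_J = T_{\phi_I^{-1}(J)}$, observing that $\phi_I^{-1}(J) \subset I$ is again a subinterval of $[0,1]$. This is a direct change-of-variables calculation: $\phi_{\phi_I^{-1}(J)} = \phi_J \circ \phi_I$, and the corresponding Jacobian factors multiply correctly. Granted this identity,
\[ \norm{T_J f}_2 = \sup_I \norm{T_I T_J f}_1 = \sup_I \norm{T_{\phi_I^{-1}(J)} f}_1 \leq \sup_{K \subset [0,1]} \norm{T_K f}_1 = \norm{f}_2, \]
which unwinds to the required balanced inequality for $(f \circ \phi_J) \chi_J$.

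Finally, for the modulus inequality, I would take a sequence $(f_n)$ weakly convergent to $f$ in $L^2(X)$ with $\norm{f_n}_2 \leq 1$ and $\norm{f_n - f_m}_{L^2(X)} > t$ for $n \neq m$. For each interval $I$ the sequence $(T_I f_n)$ converges weakly to $T_I f$ (as $T_I$ is bounded linear), remains $t$-separated in $\norma_{L^2(X)}$ (as $T_I$ is an $L^2$-isometry), and satisfies $\norm{T_I f_n}_1 \leq \norm{f_n}_2 \leq 1$. The definition of $\vvartheta_{\norma_1}$ then yields $\norm{T_I f}_1 \leq 1 - \vvartheta_{\norma_1}(t)$, and taking the supremum over $I$ gives $\norm{f}_2 \leq 1 - \vvartheta_{\norma_1}(t)$, so $\vvartheta_{\norma_2}(t) \geq \vvartheta_{\norma_1}(t)$. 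The main technical point is the composition rule $T_I T_J = T_{\phi_I^{-1}(J)}$; everything else is a routine unfolding of the definitions once that identity is in hand.
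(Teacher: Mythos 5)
Your proof is correct and follows essentially the same route as the paper's: the paper likewise defines $\|f\|_I$ as the $\|\cdot\|_1$-norm of a rescaled compression of $f$ onto $I$, sets $\|f\|_2=\sup_I\|f\|_I$, reads off the equivalence constants from the isometry and from $I=[0,1]$, and obtains the modulus inequality by pushing a $t$-separated weakly convergent sequence through each compression, your composition rule $T_I\circ T_J=T_{\phi_I^{-1}(J)}$ being exactly what underlies the paper's remark that $\|\cdot\|_2$ is ``easily seen'' to be balanced. The only point where you depart from the literal text is the normalization: the paper writes the factor $|I|^{-1}$ in the definition of balanced and of $\|f\|_I$, whereas your $|I|^{-1/2}$ is the scaling that actually makes $f\mapsto |I|^{-1/2}(f\circ\phi_I)\chi_I$ an isometry of $L^2(X)$ (as the paper itself asserts) and keeps $\sup_I\|f\|_I$ finite, so your choice is the internally consistent one.
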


\begin{proof} Define for every non-trivial interval $I \subset [0,1]$ the norm
\[ \|f\|_I = |I|^{-1} \| ( f \circ \phi_I ) \chi_I \|_1 .\]
This norm keeps the same equivalence constants as the map $f \mapsto |I|^{-1} ( f \circ \phi_I ) \chi_I$ is an isometry on its image for the canonical norm on $L^2(X)$. Now, define
$$ \|f\|_2 = \sup\{ \|f\|_I: I \subset [0,1] \} $$
which keeps the same equivalence constants than $\| \cdot \|_1$ and it is easily seen to be a balanced norm.\\
Suppose that 
if $(f_n)_{n=1}^\infty$ is a $t$-separated sequence weakly converging to $f$
with $\|f_n\|_2=1$. Then for any nontrivial $I \subset [0,1]$ we have 
$$ |I|^{-1} \| ( f_n \circ \phi_I ) \chi_I \|_1 = \|f_n\|_I \leq 1$$ 
and $|I|^{-1} ( f_n \circ \phi_I ) \chi_I$ is $t$-separated weakly converging to $|I|^{-1} ( f \circ \phi_I ) \chi_I$. Therefore
\[ \|f\|_I = \| |I|^{-1} ( f \circ \phi_I ) \chi_I \|_1 \leq 1- \overline{\vartheta}_{\| \cdot \|_1}(t). \]
Taking supremum on $I \subset [0,1]$ we get $\|f\|_2 \leq 1- \overline{\vartheta}_{\| \cdot \|_1}(t)$ which clearly implies 
$\overline{\vartheta}_{\norma_2}(t) \geq \overline{\vartheta}_{\norma_1}(t)$ as $(f_n)$ was arbitrary.
\end{proof}

Note that any equivalent norm on $L^2(X)$ can be improved to be stable by measure invariant transformations of $[0,1]$ and keeping the AUC modulus.\\

\begin{proof}[Proof of Theorem~\ref{th:renorm}] After Lemma~\ref{lemma:balanced} we may assume that $\norma_{L^2(X)}$ is balanced. The desired norm $\norma_X$ on $X$ will be its restriction by the canonical inclusion of $X$ into $L^2(X)$. Let $x,y \in B_{\norma_X}$ with $\|x-y\| \geq t$. For each $n\in \mathbb N$, let $f_n$ be the function which takes the values $x$ and $y$ alternatively on the intervals of length $2^{-n}$ of the $n$-dyadic partition of $[0,1]$. As all these functions can be represented as a convex combination of $x$ and $y$ concentrated on intervals, we have $(f_n) \subset B_{\norma_{L^2(X)}}$. Obviously, we have $\|f_n -f_m\| \geq t/2$ and it is easy to see that $(f_n)$ converges weakly to $f(t)=(x+y)/2$ for all $t \in [0,1]$. That implies 
\[\left\| \frac{x+y}{2} \right\|_X = \| f \|_{L^2(X)} \leq 1 -
\overline{\vartheta}_{\norma_{L^2(X)}}(t/2)\]
and the conclusion follows. 
\end{proof}

Indeed, the above argument works also in $L^p(X)$, with $1<p<+\infty$. The previous ideas provide this generalization of a result of Partington \cite{Partington}. 

\begin{coro}
Let $1<p<+\infty$. If $L^p(X)$ has an equivalent balanced AUC norm then the induced norm on $X$ is UC.
\end{coro}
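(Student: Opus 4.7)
The plan is a direct transcription of the proof of Theorem~\ref{th:renorm} with $L^p(X)$ in place of $L^2(X)$. Let $\norma_{L^p(X)}$ denote the given balanced equivalent AUC norm and let $\norma_X$ be its restriction to $X$ along the canonical inclusion $x\mapsto\overline{x}$, with $\overline{x}$ the constant function equal to $x$ on $[0,1]$. Since a Banach space is UC precisely when $\vartheta_{\norma_X}(t)>0$ for every $t\in(0,1]$, it suffices to prove
\[
\vartheta_{\norma_X}(t)\geq \overline{\vartheta}_{\norma_{L^p(X)}}(2^{-1/p}t), \qquad t\in(0,1],
\]
whose right-hand side is strictly positive by the AUC hypothesis.

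To establish this inequality, fix $x,y\in X$ with $\norm{x}_X=\norm{y}_X=1$ and $\norm{x-y}\geq t$ and form the alternating functions $f_n\colon[0,1]\to X$ of Theorem~\ref{th:renorm} taking value $x$ on the odd-indexed intervals and $y$ on the even-indexed intervals of the $n$-th dyadic partition of $[0,1]$. Applied to the constant function $\overline{z}$, the balanced inequality gives $\norm{\overline{z}\chi_I}_{L^p(X)}\leq |I|\norm{z}_X$ for every $z\in X$ and every interval $I\subset[0,1]$; this step is independent of the value of $p$. Summing via the triangle inequality over the $2^{n-1}$ sub-intervals of length $2^{-n}$ on which $f_n$ takes value $x$ (respectively $y$) yields $\norm{f_n}_{L^p(X)}\leq(\norm{x}_X+\norm{y}_X)/2=1$. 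A standard Riemann-sum argument---the indicator of the odd-indexed intervals converges weakly to the constant $1/2$ in $L^q$, with $q$ the conjugate exponent of $p$---shows $f_n\rightharpoonup\overline{(x+y)/2}$ in $L^p(X)$.

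The only place where $p$ enters non-trivially is the separation in the ambient canonical norm: a direct computation gives $\norm{f_n-f_m}=2^{-1/p}\norm{x-y}\geq 2^{-1/p}t$ for $n\neq m$. Applying the definition of $\overline{\vartheta}_{\norma_{L^p(X)}}$ to the weakly convergent, $2^{-1/p}t$-separated sequence $(f_n)\subset B_{\norma_{L^p(X)}}$ produces
\[
\norm{\frac{x+y}{2}}_X=\norm{\overline{\frac{x+y}{2}}}_{L^p(X)}\leq 1-\overline{\vartheta}_{\norma_{L^p(X)}}(2^{-1/p}t),
\]
and taking the supremum over admissible $x,y$ gives the claimed bound on $\vartheta_{\norma_X}(t)$. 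The argument presents no genuine obstacle: the only $p$-dependent detail is the separation constant $2^{-1/p}$ in the argument of $\overline{\vartheta}$, and neither the balanced-norm control $\norm{f_n}_{L^p(X)}\leq 1$ nor the weak convergence of $(f_n)$ relies on $p=2$.
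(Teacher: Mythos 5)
Your proposal is correct and follows exactly the route the paper intends: the paper dispatches this corollary with the single remark that the argument of Theorem~\ref{th:renorm} ``works also in $L^p(X)$'', and you have simply carried out that transcription, correctly identifying the only $p$-dependent points (the balanced-norm bound $\norm{f_n}\leq 1$, which is indeed independent of $p$, and the separation constant $2^{-1/p}$ replacing $2^{-1/2}$). No further comment is needed.
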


A quantitative version of the result of Partington was proved by Garcia and Johnson in \cite{GJ}: if $L^r(X)$  has an asymptotically uniformly convex renorming of power type $p$, with $1<r\leq p<\infty$, then $X$ has a uniformly convex renorming of power type $p$ if $p=r$, and of power type an arbitrary $p'>p$ when $p>r$. Our argument proves the following improvement that also answers the question in \cite[Remark~4.1]{GJ}.

\begin{coro} Let $1<r, p<+\infty$. Assume that $L^r(X)$ has an asymptotically uniformly convex renorming of power type $p$. Then $X$ has a uniformly convex renorming of power type $p$. 
\end{coro}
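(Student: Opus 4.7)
The plan is to run the proof of Theorem~\ref{th:renorm} in $L^r(X)$ instead of $L^2(X)$ and then chain the power-type estimates across the two relative moduli $\vvartheta$ and $\vartheta$ by means of the quantitative modulus comparisons of Section~2. Let $\eqnorma_1$ be a $\gamma$-equivalent renorming of $L^r(X)$ with $\ddelta_{\eqnorma_1}(t)\ge c\,t^p$. The inequality $\ddelta_{\eqnorma_1}(2^{-1}\gamma^{-1}t)\le \vvartheta_{\eqnorma_1}(t)$ established in the proposition of Section~2 immediately yields $\vvartheta_{\eqnorma_1}(t)\ge c_1 t^p$ for some new constant $c_1>0$.

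Next I invoke the $L^r$-version of Theorem~\ref{th:renorm} promised in the remark following its proof; this should produce a $\gamma$-equivalent norm $\norma_X$ on $X$ with $\vartheta_{\norma_X}(t)\ge \vvartheta_{\eqnorma_1}(t/2)\ge c_2\,t^p$. To justify the generalization, I would adapt Lemma~\ref{lemma:balanced} by replacing the scaling factor $|I|^{-1}$ by $|I|^{-1/r}$, since a direct computation gives $\|(f\circ \phi_I)\chi_I\|_{L^r(X)}=|I|^{1/r}\|f\|_{L^r(X)}$, so that $f\mapsto |I|^{-1/r}(f\circ \phi_I)\chi_I$ is an isometry of $L^r(X)$ onto its image. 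The corresponding balanced condition $|I|^{-1/r}\eqnorm{(f\circ \phi_I)\chi_I}\le \eqnorm{f}$ ensures that the alternating two-valued step functions $f_n$ built from $x,y\in B_{\norma_X}$ on the $n$-dyadic partition of $[0,1]$ still lie in the unit ball of any balanced norm, and since $(f_n)$ is bounded and converges pointwise to the constant $(x+y)/2$ it also converges weakly in $L^r(X)$, so the midpoint estimate $\eqnorm{(x+y)/2}\le 1-\vvartheta_{\eqnorma_1}(t/2)$ survives unchanged.

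To finish, the inequality $\vartheta_{\norma_X}(t)\le \delta_{\norma_X}(\gamma t)$ combined with the substitution $s=\gamma t$ yields $\delta_{\norma_X}(s)\ge c_2(s/\gamma)^p=c_3\,s^p$, that is, $\norma_X$ is uniformly convex of power type $p$ on $X$. The only genuinely nontrivial step is the middle one: one must verify that the adjusted scaling $|I|^{-1/r}$ is compatible with the iterated balanced inequality along dyadic subdivisions and that the AUC modulus of the balanced version remains bounded below by that of the original norm. Both verifications mimic the $L^2$ case and go through without incident once the correct isometric scaling is in place.
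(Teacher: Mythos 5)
Your route is the one the paper intends: the paper gives this corollary no separate proof beyond the remark that ``the above argument works also in $L^p(X)$'', so rerunning Lemma~\ref{lemma:balanced} and Theorem~\ref{th:renorm} in $L^r(X)$ and chaining the modulus comparisons of Section~2, as you do in your first and last paragraphs, is exactly the intended strategy (and those two outer steps are fine). The gap is in the middle step, and it is genuine. With your isometric scaling $|I|^{-1/r}$, the balanced condition applied to a constant $z$ gives only $\eqnorm{z\chi_I}\leq |I|^{1/r}\eqnorm{z}$. The only mechanism you have for placing the alternating function $f_n=\sum_{j=1}^{2^n}z_j\chi_{I_j}$ (with $z_j\in\{x,y\}$ and $|I_j|=2^{-n}$) inside $B_{\eqnorma}$ is the triangle inequality, and it yields $\eqnorm{f_n}\leq \sum_{j}|I_j|^{1/r}=2^{n(1-1/r)}$, which is unbounded for $r>1$. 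The $L^2$ argument you are mimicking does \emph{not} use the isometric scaling: it uses $|I|^{-1}$ precisely so that each $|I_j|^{-1}z_j\chi_{I_j}$ lies in the unit ball and $f_n=\sum_j |I_j|\bigl(|I_j|^{-1}z_j\chi_{I_j}\bigr)$ is a convex combination of unit-ball elements. So the two requirements you need --- that $\sup_I\norm{\cdot}_I$ remain equivalent to $\norm{\cdot}_1$ (which forces the isometric scaling, since with exponent $-1$ one has $|I|^{-1}\norm{(f\circ\phi_I)\chi_I}_1\geq \gamma^{-1}|I|^{1/r-1}\norm{f}_{L^r(X)}\to\infty$ as $|I|\to 0$) and that $f_n\in B_{\eqnorma}$ (which in the convex-combination argument forces the exponent $-1$) --- pull in opposite directions, and ``mimicking the $L^2$ case'' does not resolve the conflict. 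Closing the argument requires an additional idea, for instance arranging an $r$-subadditivity $\eqnorm{g+h}^r\leq\eqnorm{g}^r+\eqnorm{h}^r$ for disjointly supported $g,h$, which combined with your balanced condition would give $\eqnorm{f_n}^r\leq\sum_j|I_j|\,\eqnorm{z_j}^r\leq 1$; but that property is not produced by the $\sup_I$ construction and must be obtained separately while preserving the lower bound on $\vvartheta$.

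A smaller slip in the same step: $(f_n)$ does not converge pointwise to $(x+y)/2$ --- each $f_n(t)$ equals $x$ or $y$ --- so weak convergence cannot be justified by boundedness plus pointwise convergence. One should write $f_n=\tfrac{x+y}{2}+\tfrac{x-y}{2}\rho_n$ with $\rho_n$ a Rademacher-type sign function and use that $\rho_n\otimes v$ is weakly null in $L^r(X)$ for each fixed vector $v$.
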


\section{Generalized cotypes and renorming}

The definition of generalized cotype $\phi$ given in introduction has the disadvantage of being non-homogeneous in  the sense that the sum $\sum_{k=1}^n \phi(\| x_k \|)$ is not bounded by a function of the averages of the $x_k$'s. In order to remove that obstacle we introduce the following tool.
We say that the cotype $\phi$ has a {\it bound} $\Phi$ if 
\[\sum_{k=1}^n \phi(\norm{x_k})\leq \Phi \left(\int_0^1 \| \sum_{k=1}^n r_k(t) x_k \| \, dt \right) \]
and the function $\Phi$ is convex, non-decreasing and there exists $q \geq 2$ such that $\Phi(t^{1/q})$ is equivalent to a concave function. 
Note that for the classic cotype $p$ we have $\phi(t)=t^p$ and $\Phi(t)=c t^p$.
The existence of cotype bounds was implicitly solved in \cite{figiel2} where Figiel actually showed that 
$\Phi(t)$ can be taken of the form $c(t^2+t^p)$ where $c>0$ and $p \geq 2$. However we will not use an explicit form for the bound because the results are more general that way and the proofs clearer. We left to the reader the easy task of checking 
that such a cotype bound $\Phi$ satisfies the $\Delta_2$ condition. 

\begin{prop}\label{bound}
Let $\phi$ a cotype of $X$ with bound $\Phi$. Then there is $q \geq 2$ and $C>0$ such that 
$$ \int_0^1 \sum_{k=1}^n \phi(\|f_k(s)\|) \, ds \leq C \, \Phi \left( \int_0^1 \| \sum_{k=1}^n r_k(t) f_k \|_{L^q(X)} \, dt \right)$$
whenever $f_1,\dots,f_n \in L^q(X)$.
\end{prop}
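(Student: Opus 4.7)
The plan is to apply the hypothesis pointwise in $s$ to the vectors $f_1(s),\dots,f_n(s)\in X$ and then to patch the resulting estimates together using the concavity built into $\Phi$. Let $q\geq 2$ be the exponent supplied by the definition of a cotype bound. For almost every $s\in[0,1]$, the cotype bound inequality applied to $f_1(s),\dots,f_n(s)$ gives
\[ \sum_{k=1}^n \phi(\|f_k(s)\|) \leq \Phi\left(\int_0^1 \|\sum_{k=1}^n r_k(t)f_k(s)\|\, dt\right) =: \Phi(G(s)). \]
Integrating in $s$ reduces the statement to proving
\[ \int_0^1 \Phi(G(s))\,ds \leq C\,\Phi\left(\int_0^1 \|\sum_{k=1}^n r_k(t)f_k\|_{L^q(X)}\, dt\right) \]
for a constant $C$ depending only on $\Phi$.

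For this, the plan is to exploit the fact that $\Psi(u):=\Phi(u^{1/q})$ is equivalent to a concave non-decreasing function $\tilde\Psi$, say with constant $C'$. Since $\Phi(G(s))=\Psi(G(s)^q)$, applying Jensen's inequality to $\tilde\Psi$ and then paying the equivalence constant to return to $\Psi$ should yield
\[ \int_0^1 \Phi(G(s))\,ds \leq C'\,\Phi\left(\left(\int_0^1 G(s)^q\,ds\right)^{1/q}\right). \]
The last step is Minkowski's integral inequality for the jointly measurable non-negative function $(s,t)\mapsto\|\sum_k r_k(t)f_k(s)\|$, which gives
\[ \left(\int_0^1 G(s)^q\,ds\right)^{1/q} \leq \int_0^1 \|\sum_{k=1}^n r_k(t)f_k\|_{L^q(X)}\, dt, \]
after which the monotonicity of $\Phi$ closes the argument.

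The step I expect to be the most delicate is the Jensen-type estimate in the middle paragraph. This is exactly where the concavity requirement in the definition of a cotype bound --- the assumption that $\Phi(u^{1/q})$ is equivalent to a concave function for some $q\geq 2$ --- plays its role; without it, the passage from the pointwise-in-$s$ bound to a statement about $L^q(X)$-averages would fail. The remaining pieces (pointwise application of the cotype bound, Minkowski's integral inequality, monotonicity of $\Phi$, and joint measurability of the integrand, which is automatic for strongly measurable $f_k$) are routine.
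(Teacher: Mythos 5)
Your argument is correct, and it follows the same skeleton as the paper's proof (pointwise application of the cotype bound in $s$, then Jensen's inequality via the concavity of $\Phi(t^{1/q})$, then an interchange of the $t$- and $s$-integrations), but the mechanism you use for the interchange is genuinely different and in fact cleaner. The paper first upgrades the inner $L^1(dt)$ Rademacher average to an $L^q(dt)$ average by Kahane's inequality, absorbs the Kahane constant inside $\Phi$ using the $\Delta_2$ property, applies Jensen to the concave majorant $\eta$ of $\Phi(t^{1/q})$, swaps the integrals by Fubini, and then needs Kahane (for $L^q(X)$-valued Rademacher sums) and $\Delta_2$ a second time to pass from $\bigl(\int_0^1\|\sum_k r_k(t)f_k\|_{L^q(X)}^q\,dt\bigr)^{1/q}$ back to the $L^1(dt)$ average appearing in the statement. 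You instead keep the $L^1(dt)$ average throughout and replace the whole Kahane--Fubini--Kahane chain by a single application of Minkowski's integral inequality to $(s,t)\mapsto\|\sum_k r_k(t)f_k(s)\|$, which lands exactly on $\int_0^1\|\sum_k r_k(t)f_k\|_{L^q(X)}\,dt$ and requires neither Kahane's inequality nor the $\Delta_2$ condition on $\Phi$. What you lose is nothing of substance; what you gain is a shorter proof with fewer hidden constants (the paper's displayed chain is in fact slightly terse about where the second Kahane application enters). You correctly identify the concavity of $\Phi(t^{1/q})$ as the load-bearing hypothesis --- it is exactly what both proofs use to make Jensen's inequality point the right way.
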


\begin{proof}
Let $q \geq 2$ given by the definition of cotype bound and $\eta$ a concave function equivalent to $\Phi(t^{1/q})$.
According to Kahane inequalities and the $\Delta_2$ property there is $a>0$ such that
\begin{align*}
 \int_0^1 \sum_{k=1}^n \phi(\|f_k(s)\|) ds &\leq a \int_0^1 \Phi \left( (\int_0^1 \|  \sum_{k=1}^n r_k(t)f_k(s) \|^q dt)^{1/q} \right) ds \\
& \leq a b \int_0^1 \hspace{-1mm} \eta  \left( \int_0^1 \|  \sum_{k=1}^n r_k(t)f_k(s) \|^q dt \right) ds\\
& \leq a b \, \eta  \left( \int_0^1 \hspace{-2mm}\int_0^1 \|  \sum_{k=1}^n r_k(t)f_k(s) \|^q dt \,  ds \right) \\
&  \leq ab \, \eta\left( \int_0^1 \| \sum_{k=1}^n r_k(t)f_k \|^q_{L^q(X)} \, dt \right) \\
&\leq abc \, \Phi\left( \int_0^1 \| \sum_{k=1}^n r_k(t) f_k \|_{L^q(X)} \, dt \right) \end{align*}
where $b,c>0$ are the constants of equivalence between  $\Phi(t^{1/q})$ and $\eta(t)$.
\end{proof}

\begin{coro}\label{uncon_series}
Let $\phi$ a cotype of $X$. Then there exists a $q \in [2,+\infty)$ such that
$$ \sum_{n=1}^\infty \int_0^1  \phi(\|f_n(s)\|) \, ds < +\infty $$
whenever the series $\sum_{n=1}^\infty f_n$ is unconditionally convergent in $L^q(X)$.
\end{coro}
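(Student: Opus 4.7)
The plan is to apply Proposition~\ref{bound} to the partial sums of the series and then let the number of terms tend to infinity, exploiting the classical sign-boundedness criterion for unconditional convergence to control the right-hand side uniformly.

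Let $q \geq 2$ and $C > 0$ be the constants furnished by Proposition~\ref{bound}, and suppose $\sum_{n=1}^\infty f_n$ converges unconditionally in $L^q(X)$. The first step is to recall the classical fact that unconditional convergence of $\sum f_n$ in a Banach space $Y$ implies
\[ M := \sup\Bigl\{ \, \bigl\| \sum_{k=1}^N \epsilon_k f_k \bigr\|_Y \,:\, N \in \mathbb N,\ (\epsilon_k) \in \{-1,1\}^N \,\Bigr\} < +\infty, \]
see \cite{LT}. Taking $Y = L^q(X)$ and using that each Rademacher function $r_k(t)$ takes values in $\{-1,1\}$ pointwise in $t$, this bound propagates to
\[ \int_0^1 \Bigl\| \sum_{k=1}^N r_k(t) f_k \Bigr\|_{L^q(X)} dt \,\leq\, M \quad \text{for every } N \in \mathbb N. \]

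The second step is to feed this into Proposition~\ref{bound}, applied to $f_1,\dots,f_N$, and invoke monotonicity of the cotype bound $\Phi$ to conclude
\[ \int_0^1 \sum_{k=1}^N \phi(\|f_k(s)\|) \, ds \,\leq\, C\, \Phi\bigl(\,\textstyle\int_0^1 \|\sum_{k=1}^N r_k(t) f_k\|_{L^q(X)} \, dt \,\bigr) \,\leq\, C\,\Phi(M). \]
Interchanging the finite sum with the integral and then passing to the limit $N \to \infty$ via the monotone convergence theorem (permissible since $\phi \geq 0$), we obtain
\[ \sum_{k=1}^\infty \int_0^1 \phi(\|f_k(s)\|) \, ds \,\leq\, C\,\Phi(M) \,<\, +\infty, \]
which is the desired conclusion. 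The finiteness of the right-hand side uses that $\Phi$ is finite-valued on $[0,\infty)$, a consequence of its convexity and the $\Delta_2$ property recorded before Proposition~\ref{bound}.

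There is really no substantive obstacle: the whole content is packaged inside Proposition~\ref{bound}, and the corollary is essentially its reformulation for unconditionally convergent series. The only point one must be careful about is precisely the uniform sign-bound $M < \infty$, for which we lean on the standard equivalence between unconditional convergence and boundedness of signed partial sums.
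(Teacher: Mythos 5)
Your proof is correct and is exactly the argument the paper intends: the corollary is stated without proof as an immediate consequence of Proposition~\ref{bound}, obtained precisely by bounding the Rademacher averages of the partial sums via the uniform sign-boundedness characterization of unconditional convergence and then letting $N\to\infty$. The only point worth making explicit is that the corollary's hypothesis ``$\phi$ a cotype'' silently uses the fact, recorded just before Proposition~\ref{bound}, that every generalized cotype admits a bound $\Phi$, which is what licenses your invocation of that proposition.
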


Next result says that if $\phi$ is a cotype of $X$, then it ``almost'' is a cotype of $L^2(X)$.

\begin{prop}
Let $\phi$ a cotype of $X$. Then there is a  cotype bound $\Phi$,  $q \geq 2$ and $C>0$ such that
$$ \sum_{k=1}^n \phi(\| f_k \|_{L^2(X)}) \leq C \, \Phi \left(\int_0^1 \| \sum_{k=1}^n r_k(t) f_k \|_{L^q(X)} dt \right) $$
whenever $f_1,\dots,f_n \in L^q(X)$.
\end{prop}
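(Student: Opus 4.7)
My plan is to reduce the claim to the cotype inequality already available in $X$ by testing it on the one-dimensional slice $\mathbb{R}\cdot u$ for a fixed unit vector $u\in X$, and then to transport the resulting scalar estimate back into $L^q(X)$ by means of Khintchine's and Kahane's inequalities.

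First I will fix a cotype bound $\Phi$ for $\phi$, for instance $\Phi(t)=c(t^2+t^p)$ supplied by Figiel's result, and choose an exponent $q\geq 2$ so that $\Phi(t^{1/q})$ is equivalent to a concave function (taking $q=p$ works). Assuming $X\neq \{0\}$, I pick a unit vector $u\in X$ and set $\alpha_k:=\|f_k\|_{L^2(X)}$. Applying the cotype inequality in $X$ to the vectors $\alpha_1 u,\dots,\alpha_n u$ yields
\[
\sum_k \phi(\alpha_k) \;\leq\; \Phi\!\left(\int_0^1 \Bigl|\sum_k r_k(t)\,\alpha_k\Bigr|\,dt\right).
\]
Khintchine's inequality for scalar Rademacher sums bounds the inner integral by $C_0\bigl(\sum_k \alpha_k^2\bigr)^{1/2}$, and the $\Delta_2$ property of $\Phi$ absorbs the factor $C_0$.

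Next I identify the $\ell^2$-sum with an $L^2$-norm: $\bigl(\sum_k\alpha_k^2\bigr)^{1/2}=\|g\|_{L^2(0,1)}$, where $g(s):=\bigl(\sum_k \|f_k(s)\|^2\bigr)^{1/2}$. For each fixed $s$, the orthogonality of Rademacher functions in $L^2$ combined with Kahane's inequality in $X$ gives $g(s) \sim_q \bigl(\int_0^1\|\sum r_k(t) f_k(s)\|^q\,dt\bigr)^{1/q}$. Raising to the $q$-th power, integrating over $s$, using Fubini and then Kahane's inequality in the Banach space $L^q(X)$, I arrive at
\[
\|g\|_{L^q(0,1)} \;\sim\; \int_0^1 \Bigl\|\sum_k r_k(t)\,f_k\Bigr\|_{L^q(X)}\,dt.
\]
Since $(0,1)$ carries a probability measure and $q\geq 2$, the elementary inclusion $\|g\|_{L^2}\leq \|g\|_{L^q}$ holds. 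Chaining the estimates and invoking $\Delta_2$ of $\Phi$ once more delivers the desired bound with the chosen $\Phi$ and $q$.

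The only delicate point is the bookkeeping of the multiplicative constants produced at each application of Khintchine, Kahane in $X$, and Kahane in $L^q(X)$; all of them get absorbed by the $\Delta_2$ property that is built into the notion of cotype bound. I anticipate no conceptual obstacle beyond this routine absorption, and in particular no need for any convexity hypothesis on $\phi$ itself.
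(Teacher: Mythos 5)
There is a genuine gap, and it sits exactly at the step you flag as routine. You claim that for each fixed $s$, ``orthogonality of the Rademacher functions combined with Kahane'' gives $g(s)=\bigl(\sum_k\|f_k(s)\|^2\bigr)^{1/2}\sim\bigl(\int_0^1\|\sum_k r_k(t)f_k(s)\|^q\,dt\bigr)^{1/q}$. Orthogonality computes $\int_0^1|\sum_k r_k(t)a_k|^2\,dt=\sum_k a_k^2$ only for \emph{scalars}; for vectors $x_k$ in a general Banach space the two-sided equivalence of $\bigl(\sum_k\|x_k\|^2\bigr)^{1/2}$ with the Rademacher average $\int_0^1\|\sum_k r_k(t)x_k\|\,dt$ is precisely the assertion that $X$ has type $2$ and cotype $2$, i.e.\ is isomorphic to a Hilbert space. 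The direction your chain actually needs, $\bigl(\sum_k\|x_k\|^2\bigr)^{1/2}\lesssim\int_0^1\|\sum_k r_k(t)x_k\|\,dt$, is exactly Rademacher cotype $2$, which is not among the hypotheses: $X$ is only assumed to have the generalized cotype $\phi$. Concretely, take $X=\ell^4$ (generalized cotype $t^4$, no cotype $2$) and $f_k\equiv e_k$ constant functions: then $\|g\|_{L^q}=n^{1/2}$ while $\int_0^1\|\sum_k r_k(t)f_k\|_{L^q(X)}\,dt\sim n^{1/4}$, so the inequality $\|g\|_{L^2}\leq\|g\|_{L^q}\lesssim\int_0^1\|\sum_k r_k(t)f_k\|_{L^q(X)}\,dt$ fails and the chain cannot close. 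The deeper issue is that your opening move --- testing the cotype inequality on the one-dimensional slice $\mathbb{R}u$ --- discards all the vector-valued cotype information of $X$ and leaves you with the bound $\Phi\bigl(C(\sum_k\|f_k\|_{L^2(X)}^2)^{1/2}\bigr)$, which is genuinely larger than the target.

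The paper's proof avoids this by never leaving the vector-valued setting: it first replaces $\phi$ by Figiel's improved cotype $\overline{\phi}$ with $\xi(t)=\overline{\phi}(t^{1/2})$ convex, uses Jensen's inequality to get $\overline{\phi}(\|f_k\|_{L^2(X)})=\xi\bigl(\int_0^1\|f_k(s)\|^2\,ds\bigr)\leq\int_0^1\overline{\phi}(\|f_k(s)\|)\,ds$, and then applies Proposition~\ref{bound}, which runs the cotype inequality of $X$ fiberwise in $s$ and uses the concavity of $\Phi(t^{1/q})$ to pull the outer integral inside. So the convexity you declare unnecessary is in fact the engine of the correct argument.
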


\begin{proof}
According to Figiel \cite[Theorem~1.8]{figiel2} $\phi$ can be bounded above by a cotype $\overline{\phi}$ such that $ \xi(t)=\overline{\phi}(t^{1/2})$ is  convex. Therefore
$$  \sum_{k=1}^n \phi(\| f_k \|_{L^2(X)}) \leq  \sum_{k=1}^n \overline{\phi}(\| f_k \|_{L^2(X)}) ~~~~~~~~~~~~~~~~~~~~ $$
$$ \leq   \sum_{k=1}^n  \xi(\int_0^1 \| f_k(s) \|^2 ds )
\leq    \sum_{k=1}^n \int_0^1 \xi( \| f_k (s)\|^2 ) ds ~~~~~~~~~~~~ $$
$$ =  \sum_{k=1}^n \int_0^1 \overline{\phi}(\| f_k(s) \|) ds  \leq C \, \Phi \left(\int_0^1 \| \sum_{k=1}^n r_k(t) f_k \|_{L^q(X)} dt \right)  $$
where the last inequality comes from Proposition~\ref{bound} being $\Phi$ a bound for $\overline{\phi}$.
\end{proof}

We do not know if a cotype of $X$ is always a cotype of $L^2(X)$. As a consequence of renorming results we will give partial results in this sense, Corollary~\ref{hereda-cotype}. We also have the following well-known result.

\begin{coro}
If $X$ has cotype $q$ then so does $L^q(X)$ for $q \geq 2$.
\end{coro}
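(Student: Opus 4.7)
The plan is to give a direct Fubini argument. Invoking the preceding proposition is not the right route here: with $\phi(t)=t^q$ that proposition only bounds a sum of $L^2(X)$-norms of the $f_k$, whereas we need a sum of $L^q(X)$-norms. The starting point is instead the classical equivalent form of cotype $q$: by Kahane's inequalities (see \cite{LT,Pisier_libro}), $X$ has cotype $q$ if and only if there is a constant $K>0$ such that
$$\left(\sum_{k=1}^n \|x_k\|^q\right)^{1/q}\leq K \left(\int_0^1 \Big\|\sum_{k=1}^n r_k(t)\, x_k\Big\|^q dt\right)^{1/q}$$
for every finite family $x_1,\dots,x_n\in X$.

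Given $f_1,\dots,f_n\in L^q(X)$, I would apply this pointwise inequality, raised to the $q$-th power, to the vectors $f_1(s),\dots,f_n(s)\in X$ for almost every $s\in [0,1]$, and then integrate in $s$ over $[0,1]$. Using Fubini to exchange the order of integration on the right-hand side produces
$$\sum_{k=1}^n \|f_k\|_{L^q(X)}^q\leq K^q \int_0^1 \Big\|\sum_{k=1}^n r_k(t)\, f_k\Big\|_{L^q(X)}^q dt.$$
A final application of Kahane's inequalities in $L^q(X)$ allows one to replace the $L^q$-average on the right by the $L^1$-average appearing in the definition of cotype given in the introduction, proving that $L^q(X)$ has cotype $q$.

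There is no real obstacle: the whole argument rests on the exact compatibility between the definition of the $L^q(X)$-norm, which is the $L^q$-integral of pointwise $X$-norms, and the $L^q$-averages furnished by Kahane. The only bookkeeping is the consistent passage between $L^1$- and $L^q$-Rademacher averages, which is the standard content of Kahane's inequalities and does not affect the cotype exponent.
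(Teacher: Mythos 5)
Your argument is correct and is essentially the paper's: the paper's one-line proof defers to ``(the proof of) the previous proposition,'' which for $\phi(t)=t^q$ and bound $\Phi(t)=ct^q$ unwinds to exactly your Kahane-plus-Fubini computation (with the Jensen step becoming the identity $\|f_k\|_{L^q(X)}^q=\int_0^1\|f_k(s)\|^q\,ds$). Your opening caveat that the \emph{statement} of the preceding proposition does not suffice is accurate, and is precisely why the paper cites its proof rather than its conclusion.
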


\begin{proof}
In that case we may take the same $q$ associated with the cotype bound $t^q$, so (the proof of) the previous proposition will lead to the result.
\end{proof}

Now we will consider Banach space valued martingales. For our purposes it is enough to consider  \emph{Walsh-Payley martingales}  defined on $[0,1]$, that is, functions are measurable with respect to the dyadic partitions of $[0,1]$.
The set of all $B_X$-valued Walsh-Payley martingales is denoted $\M(B_X)$. 
Given a martingale $(f_n)_{n=0}^\infty \in \M(B_X)$, we denote $df_n := f_{n}-f_{n-1}$, with the convention that $df_0 = f_0$.\\

Next result is part of the characterizations obtained by Garling in \cite[Theorem~3]{Garling}, build upon the seminal work of Pisier 
	\cite{Pisier75}. It provides a criterium for the existence of an equivalent norm whose modulus of uniform convexity is better than a given function. 

\begin{theo}[\cite{Garling} Theorem~3]\label{prop:cotr}
Let $\phi(t) \geq 0$ an Orlicz function with the $\Delta_2$ property. Then the Banach space $X$ has an equivalent norm 
$\eqnorma{}$ with $\delta_{\eqnorma{}}(t) \geq c \, \phi(t) $ for some $c>0$ if and only if there exists a constant $C>0$ such that 
$$\sum_{n=0}^\infty \int_0^1 \phi(\norm{df_n(t)})dt < +\infty $$ 
whenever $(f_n)_{n=0}^\infty\in \M(B_X)$. 
\end{theo}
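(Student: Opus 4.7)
The theorem splits into two implications. The direction $(\Rightarrow)$ is the classical Pisier-type martingale inequality derived from uniform convexity, while $(\Leftarrow)$ is a renorming construction that forms the technical core.

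For $(\Rightarrow)$, assuming $\eqnorma$ is equivalent to $\norma$ with $\delta_{\eqnorma}(t) \geq c\,\phi(t)$, the plan is to promote this pointwise estimate to a quadratic inequality of Pisier form: there are $c_{1},c_{2}>0$ such that for all $u,v \in X$ with $\max(\eqnorm{u},\eqnorm{v}) \leq M$,
\[
\frac{1}{2}\bigl(\eqnorm{u}^{2} + \eqnorm{v}^{2}\bigr) \;\geq\; \eqnorm{\frac{u+v}{2}}^{2} \;+\; c_{1} M^{2}\phi\!\left(\frac{c_{2}\eqnorm{u-v}}{M}\right),
\]
the $\Delta_{2}$ property of $\phi$ being used to absorb the rescaling. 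Applying this pointwise in $t$ to $u = f_{n-1}(t) + df_{n}(t)$ and $v = f_{n-1}(t) - df_{n}(t)$ and then averaging over the two halves of each atom at level $n-1$ (using the fact that $df_{n}$ takes opposite values on those halves) yields
\[
\int \eqnorm{f_{n}}^{2}\,dt \;-\; \int \eqnorm{f_{n-1}}^{2}\,dt \;\geq\; c_{3}\int \phi(\eqnorm{df_{n}})\,dt.
\]
Telescoping over $n$, together with the fact that $(f_{n}) \in \M(B_{X})$ bounds $\int \eqnorm{f_{n}}^{2}$ uniformly, produces the martingale summability for $\eqnorma$; the $\Delta_{2}$ condition and the equivalence $\eqnorma \sim \norma$ then transfer it to $\norma$.

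For the harder direction $(\Leftarrow)$, a closed-graph style argument first upgrades the pointwise finiteness hypothesis to a uniform bound $C$ valid for every $(f_{n}) \in \M(B_{X})$. The new norm is then manufactured from the martingale cost functional itself. Consider
\[
\eta(x) \;=\; \sup\!\left\{ \sum_{n=0}^{N} \int \phi(\norm{df_{n}})\,dt \;:\; (f_{n})_{n=0}^{N} \in \M(B_{X}),\; f_{N} \equiv x \right\},
\]
and take $\eqnorma$ to be the Minkowski functional of (the convex hull of) the sublevel set $K = \{x : \norm{x}^{2} + \eta(x) \leq 1\}$. The uniform bound $C$ forces $\eta \leq C$ on $B_{X}$, so $\eqnorma \sim \norma$. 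The modulus estimate is obtained by a concatenation argument: given $x,y$ on the unit sphere of $\eqnorma$ with $\norm{x-y} \geq t$, one takes near-extremal martingales witnessing the values of $\eta$ at $x$ and $y$, and prepends a single splitting step starting at $(x+y)/2$ and jumping to $x$ or $y$ with equal probability. The extra step contributes $\phi(\norm{x-y}/2)$ to the cost, which through the definition of $\eqnorma$ forces $\eqnorm{\frac{x+y}{2}} \leq 1 - c\,\phi(t)$, and hence $\delta_{\eqnorma}(t) \geq c\,\phi(t)$.

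The main obstacle is the convexity of the construction. The bare supremum defining $\eta$ need not be convex in $x$, so $K$ as defined above can fail to be convex and the prescription has to be modified (convex envelope, duality reformulation, or parameterization by linear functionals evaluated at martingale endpoints, as in Garling's original argument) so as to guarantee that $\eqnorma$ is an honest norm without destroying the extremal geometry used in the concatenation step. The $\Delta_{2}$ hypothesis on $\phi$ is indispensable throughout, both to interchange additive martingale costs with multiplicative rescalings of the norm and to ensure that the Orlicz--Young function attached to $\phi$ is compatible with the quadratic combination $\norm{\cdot}^{2} + \eta(\cdot)$ used to define $K$.
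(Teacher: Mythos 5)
First, a point of reference: the paper does not prove this statement at all; it is quoted as Theorem~3 of Garling's paper and used as a black box, so your attempt can only be measured against the known Pisier--Garling argument. Your overall architecture does match that argument: the forward direction via a homogeneous two-point quadratic inequality equivalent to the modulus estimate, applied atom by atom to a Walsh--Paley martingale and telescoped, and the backward direction via an extremal ``martingale cost'' functional together with a prepending step. The forward direction is essentially correct as sketched (Figiel's quasi-monotonicity of $\delta(t)/t^{2}$ is what lets you pass from $\delta_{\eqnorma}(t)\geq c\,\phi(t)$ to the inequality with the factor $M^{2}\phi(c_{2}\|u-v\|/M)$, and $\Delta_{2}$ absorbs the constants), and the upgrade from pointwise finiteness to a uniform bound $C$ is indeed a necessary step, usually done by splicing a sequence of martingales of unbounded cost into a single element of $\M(B_X)$.

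The backward direction, as written, has two concrete defects. First, the constraint $f_{N}\equiv x$ in the definition of $\eta$ makes the functional degenerate: a Walsh--Paley martingale whose last term is the constant function $x$ is constant at every level, so $\eta(x)=\phi(\norm{x})$ and carries no information. The constraint must be $f_{0}\equiv x$, which is also the only reading compatible with your prepending step. Second, and more seriously, the sign of $\eta$ in the defining functional is reversed. The prepending argument shows
\[ \eta\Bigl(\frac{x+y}{2}\Bigr)\;\geq\;\frac{1}{2}\eta(x)+\frac{1}{2}\eta(y)+\phi\Bigl(\frac{\norm{x-y}}{2}\Bigr), \]
so $\eta$ is midpoint \emph{superadditive}: it is large at midpoints. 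Hence $\norm{x}^{2}+\eta(x)$ admits no upper bound at the midpoint of the form ``average minus a gain,'' the set $K=\{x:\norm{x}^{2}+\eta(x)\leq 1\}$ need not be convex for structural reasons, and taking a convex hull or envelope only enlarges $K$ and destroys precisely the quantity $\phi(\norm{x-y}/2)$ you need for the modulus. The correct functional is $F(x)=\norm{x}^{2}+\varepsilon\bigl(C-\eta(x)\bigr)$, where $C$ is the uniform bound; this is nonnegative and comparable to $\norm{x}^{2}$ on $B_{X}$ and satisfies $F\bigl(\frac{x+y}{2}\bigr)\leq\frac{1}{2}F(x)+\frac{1}{2}F(y)-\varepsilon\,\phi\bigl(\frac{\norm{x-y}}{2}\bigr)$, after which one passes to the convex envelope (the midpoint gain survives this, by a standard lemma) and takes the Minkowski functional of $\{F\leq 1\}$. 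So the convexity obstacle you flag at the end is real, but it is caused by the sign, not by the non-convexity of a supremum, and it cannot be repaired while keeping $+\eta$.
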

	
\begin{proof}[Proof of Theorem~\ref{UMD-renorm}]
According to Figiel's improvement of cotypes \cite[Theorem~1.8]{figiel2} we may assume that $t\mapsto \phi(t^{1/2})$ is convex and satisfies the $\Delta_2$ property, so $\phi$ is convex and $t^{-1}\phi(t)\to 0$ as $t\to 0$.
Let $q\geq 2$ the number given by Corollary~\ref{uncon_series}. Any martingale $(f_n)_{n=0}^\infty \in \M(B_X)$ is bounded in $L^q(X)$. Since the Banach space is UMD, the series $\sum_{n=1}^\infty df_n$ is unconditionally convergent and therefore the hypothesis of Theorem~\ref{prop:cotr} is fulfilled by the thesis of Corollary~\ref{uncon_series}.
 \end{proof}

\begin{rema}
Note that we do not need all the power of the definition of UMD space, that is, the uniform boundedness of all the partial sums of the series with arbitrary changes of signs. It is enough the boundedness of the averaged sums 
$$ \sup_n \int_0^1 \| \sum_{k=1}^n r_k(t) df_k \|_{L^q(X)} < +\infty $$
for any martingale $(f_n)$ bounded in $L^q(X)$.
\end{rema}

We finish with this straightforward application.

\begin{coro}\label{hereda-cotype}
Let $X$ be an UMD space or a super-reflexive space with l.u.st. (in particular if $X$ has an unconditional basis). Then any cotype of $X$ is a cotype of $L^2(X)$ too.
\end{coro}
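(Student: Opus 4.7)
The plan is to combine, in both cases, a renorming theorem for $X$ that is already at hand with the hereditary behavior of the modulus of uniform convexity under the Bochner functor $X\mapsto L^2(X)$, and to close the argument via Figiel--Pisier. Let $\phi$ be a cotype of $X$. Thanks to Figiel's improvement \cite[Theorem~1.8]{figiel2} (already used in Section~4) I may replace $\phi$ by an equivalent cotype for which both $\phi$ and $t\mapsto\phi(t^{1/2})$ are convex and satisfy the $\Delta_2$ condition. In the UMD case, Theorem~\ref{UMD-renorm} then provides an equivalent norm $\eqnorma$ on $X$ with $\delta_\eqnorma(t)\geq c\,\phi(t)$; in the super-reflexive l.u.st.\ setting, the analogous renorming is Figiel's classical result cited in the introduction.

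The second step is to lift this convexity estimate from $(X,\eqnorma)$ to $L^2(X,\eqnorma)$ equipped with its canonical Bochner norm. The route is to pass to the equivalent quadratic formulation of uniform convexity: $F(x)=\eqnorm{x}^2$ satisfies, on the whole of $X$, a fibrewise strong-convexity inequality of the shape $F\!\bigl(\tfrac{x+y}{2}\bigr)+c\,\phi(\eqnorm{x-y})\leq \tfrac{1}{2}(F(x)+F(y))$. Integrating in the measure variable and using Jensen's inequality for the convex function $\phi(t^{1/2})$ applied to the $L^2$-average of $\eqnorm{f(s)-g(s)}$ gives the same inequality for $F_{L^2(X,\eqnorma)}$, from which $\delta_{L^2(X,\eqnorma)}(t)\geq c'\,\phi(t)$ follows by unsquaring.

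Finally, by the Figiel--Pisier theorem mentioned in the introduction, any modulus of convexity is a generalized cotype; applied to the renormed space $L^2(X,\eqnorma)$ this shows that $c'\phi$, and therefore $\phi$ itself, is a generalized cotype of $L^2(X)$, the notion being invariant under equivalent renorming and positive rescaling. The main obstacle is the middle step: although for power-type moduli $t^p$ the transfer reduces to Clarkson-type inequalities, for an arbitrary cotype $\phi$ one must work with the quadratic reformulation of the modulus so as to be able to invoke Jensen on $\phi(t^{1/2})$. This is precisely why Figiel's improvement has to be invoked at the very beginning.
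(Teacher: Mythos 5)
The paper actually gives no proof of this corollary --- it is stated as a ``straightforward application'' immediately after Theorem~\ref{UMD-renorm} --- so there is nothing to compare line by line; but your architecture (renorm $X$ so that $\delta_{\eqnorma}\geq c\,\phi$ via Theorem~\ref{UMD-renorm} or Figiel's l.u.st.\ result, lift the modulus to the Bochner space $L^2(X,\eqnorma)$, then invoke Figiel--Pisier together with the renorming-invariance of generalized cotype) is surely the intended one, and each of the three ingredients is available in or cited by the paper. The only step that needs tightening is the middle one. The fibrewise inequality $F\bigl(\tfrac{x+y}{2}\bigr)+c\,\phi(\eqnorm{x-y})\leq \tfrac12(F(x)+F(y))$ cannot hold ``on the whole of $X$'' in the unnormalized form you wrote: by homogeneity the correct two-point inequality carries a factor of the form $M^2\phi(\eqnorm{x-y}/M)$ with $M=\max(\eqnorm{x},\eqnorm{y})$, and this matters here because $f\in B_{L^2(X)}$ does \emph{not} force $f(s)\in B_X$ fibrewise, so you genuinely need the inequality for vectors of arbitrary norm. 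Consequently the Jensen step is not a direct application of the convexity of $\eta(u)=\phi(u^{1/2})$ to $\int\eqnorm{f(s)-g(s)}^2\,ds$; one integrates $M(s)^2\eta\bigl(\eqnorm{f(s)-g(s)}^2/M(s)^2\bigr)$ and must use the joint convexity of the perspective function $(u,v)\mapsto u\,\eta(v/u)$ together with the $\Delta_2$ property of $\phi$ (both of which you have after Figiel's improvement). This is exactly the content of Figiel's classical theorem on the moduli of convexity of $L^p(X)$ in \cite{figiel}, so the cleanest repair is simply to cite that result for the lifting step rather than re-derive it; with that substitution your proof is complete and, I believe, coincides with what the authors intended.
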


\section{Best modulus and best cotype}

In \cite{raja2} the second-named author proved that the modulus of convexity cannot be improved (from an asymptotic point of view) if and only if it is equivalent to  $\NN_X(t)^{-1}$. In such a case, $\NN_X(t)^{-1}$ would be as well a cotype of $X$ (the second-named author regrets to say that Proposition 5.5 in
\cite{raja2}, which says that $\NN_X(t)^{-1}$ is always a generalized cotype of $X$, is not correct). 
The renorming results with cotype function, as Theorem~\ref{UMD-renorm}, would imply that $\NN_X(t)^{-1}$ is the best cotype too. In general, the best modulus of convexity or the best cotype is not attainable. Figiel built a super-reflexive space $F$ with symmetric basis not having a renorming with best modulus of convexity neither a best cotype. Note that the space $L^2(F)$ provides a counter-example to the existence of renormings with best AUC modulus after Theorem~\ref{th:renorm}.

\begin{proof}[Proof of Theorem~\ref{sup-cotype}]
The construction of $\QQ$ is as follows. Firstly we claim that for every $\varepsilon>0$ there is a least $\QQ(\varepsilon)$ such that if $n \geq \QQ(\varepsilon)$ and $x_1,\dots,x_n \in X$ are such that

$$  \int_0^1 \| \sum_{k=1}^n r_k(t) x_k \| \, dt \leq 1 $$
then $n^{-1} \sum_{k=1}^n \| x_k \| \leq \varepsilon$. That can be deduced easily from the fact that $X$ has 
nontrivial Rademacher classic cotype $p \geq 2$. Nevertheless we will present a direct argument just using the very definition of super-reflexive space. Together with submultiplicativity of $\QQ$ proved below, that would provide a proof of the existence of nontrivial cotypes in super-reflexive Banach spaces not resorting to uniformly convex renorming.\\
Suppose that the statement is false, so for some $\varepsilon>0$ one can find arbitrarily long sequences $x_1,\dots,x_n$ such that $  \int_0^1 \| \sum_{k=1}^n r_k(t) x_k \| \, dt \leq 1 $ and $n^{-1} \sum_{k=1}^n \| x_k \| > \varepsilon$. Those finite sequences can be mixed in one infinite sequence $(x_n)_{n=1}^\infty$ in the ultra-power $X^{\mathcal U}$ where ${\mathcal U}$ is a free ultrafilter over ${\Bbb N}$. The sequence preserve the properties in the following way $  \int_0^1 \| \sum_{k=1}^n r_k(t) x_k \| \, dt \leq 1 $ and 
$n^{-1} \sum_{k=1}^n \| x_k \| \geq \varepsilon$ for every $n \in {\Bbb N}$. Put $f_n(t)=r_n(t)x_n$ and consider the sequence $(f_n)_{n=1}^\infty$ in the reflexive space $L^2(X^{\mathcal U})$. It is well known that 
$(f_n)_{n=1}^\infty$ is a monotone basic sequence  \cite[p. 78]{LT} and $\sup_n \| \sum_{k=1}^n f_k \|_{L^2(X^{\mathcal U})} <\infty$ thanks to Kahane inequality.
On the other hand, we have  $\lim_n n^{-1} \sum_{k=1}^n \|f_k\|_{L^2(X^{\mathcal U})} \not =0$ which means that 
$(f_n)_{n=1}^\infty$ is not boundedly complete \cite[Theorem 4.15]{banach}. That denies the reflexivity of $L^2(X^{\mathcal U})$ proving the existence of $ \QQ(\varepsilon)$.\\
Now we will prove the submultiplicativity of $\QQ$. Indeed, take $\varepsilon_1, \varepsilon_2>0$, $n=\QQ(\varepsilon_1)$ and $m=\QQ(\varepsilon_2)$. Let $x_1,\dots,x_{nm} \in X$ such that 
$$  \int_0^1 \| \sum_{k=1}^{nm} r_k(t) x_k \| \, dt \leq 1 .$$
Take $\overline{r}_j=r_{nm+j}$ the Rademacher functions in order to have more of them with simple indices. For each $t \in [0,1]$ we have
$$ m^{-1} \sum_{k=1}^m \| \sum_{j=1}^n r_{n(k-1)+j}(t) x_{n(k-1)+j} \| $$
$$ \leq \varepsilon_2 \int_0^1 \| \sum_{k=1}^m \overline{r}_k(s) \sum_{j=1}^n r_{n(k-1)+j}(t) x_{n(k-1)+j} \| ds.  $$
When integrating with respect to $t$ the last inequality note that 
$$  \int_0^1 \int_0^1 \| \sum_{k=1}^m  \sum_{j=1}^n \overline{r}_k(s) r_{n(k-1)+j}(t) x_{n(k-1)+j} \| ds \, dt 
=  \int_0^1 \| \sum_{k=1}^{nm} r_k(t) x_k \| \, dt $$
because any choice of signs in the sequence $\{ \pm x_1,\dots, \pm x_n\}$ produced by $\overline{r}_k(s)$ will lead to the same Rademacher average.
Therefore we have
$$ m^{-1} \sum_{k=1}^m \int_0^1 \| \sum_{j=1}^n r_{n(k-1)+j}(t) x_{n(k-1)+j} \| dt \leq \varepsilon_2 .$$ 
On the other hand
$$ n^{-1} \sum_{j=1}^n \| x_{n(k-1)+j} \| \leq \varepsilon_1 \int_0^1 \| \sum_{j=1}^n r_{n(k-1)+j}(t) x_{n(k-1)+j} \| dt $$
Multiplying by $m^{-1}$ and summing up with respect to $k$ we finally get
$$ n^{-1}m^{-1} \sum_{j=1}^{nm} \| x_j\| \leq \varepsilon_1\varepsilon_2 $$
which implies that $nm \geq \QQ(\varepsilon_1 \varepsilon_2)$.\\
Let $\phi(t)$ be a convex normalized cotype and fix $t \in (0,1)$. If $n=\QQ(t)-1$ there there exist $x_1,\dots,x_n \in X$ such that 
$$  \int_0^1 \| \sum_{k=1}^n r_k(t) x_k \| \, dt \leq 1 $$
and $n^{-1} \sum_{k=1}^\infty \| x_k \| > t$. Therefore
$$ \phi(t) \leq n^{-1} \sum_{k=1}^{n} \phi(\|x_k\|) \leq n^{-1} = (\QQ(t)-1)^{-1}.$$
We will prove now that for every $\varepsilon>0$ the function $\phi_\varepsilon(t)$ is a cotype. If $x_1,\dots,x_n$ are such that 
$$  \int_0^1 \| \sum_{k=1}^n r_k(t) x_k \| \, dt \leq 1 $$
then 
$$ \# \{ k: \|x_k\| \geq \varepsilon \} \leq \QQ(\varepsilon) $$
which implies $\sum_{k=1}^n \phi_\varepsilon(\|x_k\|) \leq 1.$ as desired.
Finally, according to \cite[Theorem~1.8]{figiel2} an arbitrary cotype $\phi(t)$ is bounded above by a convex cotype, and therefore it is bounded above by $c\, \QQ(t)^{-1}$ for a suitable constant $c>0$.
\end{proof}

\begin{rema}
Note that if we could use $\min_{1\leq k \leq n} \|x_k\|$ instead of $n^{-1} \sum_{k=1}^n \|x_k\|$ in the proof above we could remove the hypothesis of convexity for the the cotype $\phi$. However, the corresponding definition of $\QQ$ does not seem to be submultiplicative. Compare with submultiplicative property of some constant associate to monotone basic sequences in super-reflexive spaces, as a part of the proof of  \cite[Theorem~10.9]{Pisier_libro}. 
\end{rema}

If there is a way to determine $\QQ_X$ then we have explicit almost optimal generalized cotypes.

\begin{prop}\label{prop:O_X}
Let $X$ be a super-reflexive space. Then for every $\alpha>1$ the following function
$$ \phi_\alpha(t) =  |\log 1/t |^{-\alpha} \QQ_X(t)^{-1} $$
is a generalized cotype of $X$.
\end{prop}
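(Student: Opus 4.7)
The plan is to verify the definition of generalized cotype directly, choosing $a=1/2$ and producing some finite $b$. Suppose $x_1,\dots,x_n\in X$ satisfy $\int_0^1\|\sum_{k=1}^n r_k(t)x_k\|\,dt\leq 1/2$. Conditioning on a single sign $r_j$ and applying the triangle inequality to $\|r_jx_j+Y_j\|$ (with $Y_j=\sum_{k\neq j}r_kx_k$) yields $\|x_j\|\leq 1/2$ for every $j$. This is what prevents the logarithmic factor $|\log 1/t|^{-\alpha}$ from blowing up at $t=1$.

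The main step is a dyadic decomposition of the indices: for $j\geq 1$, set
\[ A_j=\{k:\ 2^{-(j+1)}\leq \|x_k\|<2^{-j}\}, \]
so that $\{1,\dots,n\}=\bigcup_{j\geq 1}A_j$. Applying the normalized cotype $\phi_{2^{-(j+1)}}$ supplied by Theorem~\ref{sup-cotype} to our sequence (whose Rademacher average is $\leq 1/2\leq 1$) gives
\[ \#A_j\leq \#\{k:\|x_k\|\geq 2^{-(j+1)}\}\leq \QQ_X(2^{-(j+1)}). \]
Now the submultiplicativity of $\QQ_X$ is crucial: it yields $\QQ_X(2^{-(j+1)})\leq C\,\QQ_X(2^{-j})$ where $C=\QQ_X(1/2)$, so the growth of the layer count is governed by $\QQ_X(2^{-j})$ alone.

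On the other side, for each $k\in A_j$ the monotonicity of the two factors defining $\phi_\alpha$ gives
\[ \phi_\alpha(\|x_k\|)\leq (j\log 2)^{-\alpha}\,\QQ_X(2^{-j})^{-1}. \]
Multiplying the cardinality bound by this pointwise estimate, the factors $\QQ_X(2^{-j})$ and $\QQ_X(2^{-j})^{-1}$ cancel, leaving the layer contribution $\sum_{k\in A_j}\phi_\alpha(\|x_k\|)\leq C(\log 2)^{-\alpha}j^{-\alpha}$. Summing over $j\geq 1$ gives a convergent series because $\alpha>1$, and this yields the finite constant $b=C(\log 2)^{-\alpha}\sum_{j\geq 1}j^{-\alpha}$.

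I do not expect a serious obstacle: the proof is essentially a dyadic book-keeping built on the two ingredients of Theorem~\ref{sup-cotype} (the step-function cotypes $\phi_\varepsilon$ and the submultiplicativity of $\QQ_X$). The only delicate point is the choice of $a<1$ to keep $|\log 1/\|x_k\||$ bounded below; taking $a=1/2$ and starting the dyadic sum at $j=1$ handles this cleanly, and the exponent restriction $\alpha>1$ enters exactly at the step where one sums $j^{-\alpha}$.
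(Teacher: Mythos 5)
Your proposal is correct and follows essentially the same route as the paper: a dyadic shell decomposition of the indices, the cardinality bound $\#\{k:\|x_k\|\geq\varepsilon\}\leq \QQ_X(\varepsilon)$ coming from the step cotypes $\phi_\varepsilon$ of Theorem~\ref{sup-cotype}, the submultiplicativity estimate $\QQ_X(2^{-(j+1)})\leq \QQ_X(1/2)\,\QQ_X(2^{-j})$, and summability of $j^{-\alpha}$ for $\alpha>1$. The only (cosmetic) difference is that you invoke monotonicity of both factors of $\phi_\alpha$ on all of $(0,1/2]$, whereas the paper hedges with an index $n_0$ and an extra additive constant $M\,\QQ_X(2^{-n_0})$ for the first few shells; your version is, if anything, slightly cleaner.
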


\begin{proof}
Let $n_0$ be such that $\phi_\alpha(t)$ is increasing on $[0,2^{-n_0}]$ (the existence comes from the fact that $\QQ_X(t)^{-1} \preceq t^2$) and let $M$ the maximum of $\phi_\alpha(t)$ on $[0,1/2]$. 
Let $x_1,\dots,x_m$ such that $\int_0^1 \| \sum_{k=1}^m r_k(t) x_k \| \leq 1/2$. 
For every $n \geq n_0$ we have
$$ \#\{ k : 2^{-n-1} < \|x_k\| \leq 2^{-n}  \}  \leq \QQ_X(2^{-n-1}) \leq C \, \QQ_X(2^{-n})$$
where $C=\QQ_X(1/2)^{-1}$. Now
\begin{align*}
\sum_{2^{-n-1} < \|x_k\| \leq 2^{-n}} \phi_\alpha(\|x_k\| ) &\leq C  (n\log 2 )^{-\alpha} \QQ_X(2^{-n})^{-1} \QQ_X(2^{-n})\\
& = C (\log 2 )^{-\alpha} n^{-\alpha} 
\end{align*} 
and therefore
\[ \sum_{k=1}^m \phi_\alpha(\|x_k\|) \leq C (\log 2 )^{-\alpha} \sum_{n=1}^\infty n^{-\alpha} + M \, \QQ(2^{-n_0}) .\]
As the constant on the right-hand side does not depend on the choice of the $x_k$'s we have proved that $\phi_\alpha(t)$ is a cotype defined on $[0,1/2]$.
\end{proof}

\begin{coro} Given a super-reflexive Banach space $X$, consider 
	\[\mathfrak{p}_X = \inf_{0<t<1}\frac{\log(\mathfrak{O}_X(t))}{\log(1/t)}.\]
Then every $p>\mathfrak{p}_X$ is a a classic Rademacher cotype of $X$. 	
\end{coro}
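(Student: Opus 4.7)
The plan is to promote submultiplicativity of $\QQ_X$ (from Theorem~\ref{sup-cotype}) into a polynomial lower bound $\QQ_X(t)^{-1}\succeq t^q$ for every $q>\mathfrak{p}_X$, and then combine this estimate with the explicit cotype produced in Proposition~\ref{prop:O_X} to deduce that $t^p$ is a generalized cotype of $X$ for every $p>\mathfrak{p}_X$; by homogeneity this is exactly the classical Rademacher cotype condition.

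First I would fix $q$ with $\mathfrak{p}_X<q<p$, and use the very definition of $\mathfrak{p}_X$ as an infimum to produce $t_0\in (0,1)$ with $\log\QQ_X(t_0)/\log(1/t_0)<q$, equivalently $\QQ_X(t_0)\leq t_0^{-q}$. Iterating submultiplicativity yields $\QQ_X(t_0^n)\leq\QQ_X(t_0)^n\leq t_0^{-qn}$ for all $n$. For any $t\in(0,1]$ I would pick $n$ with $t_0^{n+1}\leq t\leq t_0^n$; since $\QQ_X$ is decreasing,
\[ \QQ_X(t)^{-1}\geq\QQ_X(t_0^{n+1})^{-1}\geq t_0^{q(n+1)} = t_0^q\cdot t_0^{qn}\geq t_0^q\cdot t^q, \]
so $\QQ_X(t)^{-1}\succeq t^q$ on the whole interval $(0,1]$.

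Next, I would plug this estimate into the explicit cotype $\phi_\alpha(t)=|\log(1/t)|^{-\alpha}\QQ_X(t)^{-1}$ (for any fixed $\alpha>1$) provided by Proposition~\ref{prop:O_X}, obtaining $\phi_\alpha(t)\geq c\,|\log(1/t)|^{-\alpha}t^q$ on $(0,1/2]$. Because $p>q$, the function $t\mapsto t^{p-q}|\log(1/t)|^\alpha$ tends to $0$ at the origin and is therefore bounded on $(0,1/2]$, providing a constant $C>0$ with $t^p\leq C\phi_\alpha(t)$ on that interval.

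Finally, to upgrade this pointwise comparison into a cotype inequality for $t^p$, let $(a,b)$ be constants witnessing that $\phi_\alpha$ is a generalized cotype and set $a'=\min\{a,1/2\}$. Whenever $x_1,\ldots,x_n\in X$ satisfy $\int_0^1\|\sum_k r_k(t)x_k\|\,dt\leq a'$, each $x_k$ has norm at most $a'\leq 1/2$ because it is the $k$-th Rademacher Fourier coefficient of the sum; hence the pointwise inequality $t^p\leq C\phi_\alpha(t)$ applies termwise and gives
\[ \sum_{k=1}^n \|x_k\|^p \leq C \sum_{k=1}^n \phi_\alpha(\|x_k\|) \leq Cb. \]
This yields the generalized cotype condition for $t^p$, which by homogeneity is the classical Rademacher cotype $p$ inequality. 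The main delicate point will be the scale management in this last step: one must shrink the cotype constant to $a'\leq 1/2$ in order to guarantee that all relevant norms $\|x_k\|$ lie in the range where the pointwise inequality $t^p\leq C\phi_\alpha(t)$ has been established.
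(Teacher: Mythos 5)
Your proof is correct and follows essentially the same route as the paper: derive the polynomial bound $\QQ_X(t)\preceq t^{-q}$ for $q>\mathfrak{p}_X$ from submultiplicativity, deduce $t^p\preceq\phi_\alpha$ since the logarithmic factor is absorbed by $t^{p-q}$, and conclude via Proposition~\ref{prop:O_X}. The only difference is that you spell out the submultiplicativity iteration (which the paper delegates to Corollary~1.3 of \cite{raja2}) and the final rescaling step, including the correct observation that $\|x_k\|\leq\int_0^1\|\sum_j r_j(t)x_j\|\,dt$ keeps all norms in the range $[0,1/2]$ where $\phi_\alpha$ is a cotype.
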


\begin{proof} Take $p>p'>\mathfrak{p}_X$ and follow the same steps as in the proof of Corollary 1.3 in \cite{raja2} to show that $\mathfrak{O}_X\preceq t^{-p'}$. This implies that $t^p \preceq \phi_\alpha$ for any $\alpha >1$. Proposition~\ref{prop:O_X} finishes the proof.  
\end{proof}

There is always a ``best cotype'' if we think of sequence spaces rather than of functions. The {\it cotype space} was introduced by Figiel \cite{figiel2} as follows. Given a Banach space $X$ we will consider the set $A$ of all the $(t_i) \in c_{00}$ such that there are $x_i \in X$ such that $t_i=\|x_i\|$ and

$$  \int_0^1 \| \sum_{k=1}^n r_k(t) x_k \| \, dt \leq 1 .$$

Now let $B$ the convex hull of $A$ and complete $\mbox{span}(B)$ to be Banach space $E$ so as $B$ is dense in $B_E$. The space $E$, which has a symmetric basis by construction, is called the cotype space of $X$. Note that an Orlicz function $\Phi$ is a cotype of $X$ if and only if the cotype space $E$ of $X$ imbeds into $\ell_\Phi$.\\

We will turn now our attention again to the function $\NN_X$.
The next two results show how $ \NN_X(t)^{-1}$ is close in properties to a modulus of convexity in spite of there could not be a best modulus of convexity.

\begin{prop}\label{best-st}
Let $X$ be a super-reflexive Banach space and let $\Phi(t) = \NN_X(t)^{-1}$. Then
\begin{itemize}
\item[(a)] $\Phi(t)$ is equivalent to a supermultiplicative function;
\item[(b)] $\Phi(t^{1/2})$ is equivalent to a convex function.
\end{itemize}
\end{prop}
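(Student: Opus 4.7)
My plan is to handle (a) and (b) separately: (a) will be an immediate consequence of the submultiplicativity of $\NN_X$ from Theorem~\ref{main1}, while (b) will be reduced to the statement that $\Phi(t)/t^2$ is equivalent to a non-decreasing function, and then established via Figiel's improvement of cotypes.

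For (a), I would simply pick $\NN_X$ to be a submultiplicative representative of its equivalence class as provided by Theorem~\ref{main1}, and then $\Phi(st) = \NN_X(st)^{-1} \geq \NN_X(s)^{-1} \NN_X(t)^{-1} = \Phi(s)\Phi(t)$, so $\Phi$ itself is supermultiplicative and any equivalent function is equivalent to one. As a byproduct, the same inequality with $s = 1/2$ yields the doubling relation $\Phi(t) \leq \NN_X(1/2)^{-1}\Phi(t/2)$, which I will reuse in (b).

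For (b), suppose that $\Phi(t)/t^2$ has been shown to be equivalent to a non-decreasing function. Writing $\psi(t) = \Phi(t^{1/2})$, the quotient $\psi(t)/t$ is then almost non-decreasing; setting $g(s) = \sup_{0 < r \leq s} \psi(r)/r$, the primitive $\widetilde\psi(t) = \int_0^t g(s)\,ds$ has non-decreasing derivative and is therefore convex, while the doubling of $\psi$ (inherited from $\Phi$) gives $\widetilde\psi \sim \psi$. To establish the monotonicity of $\Phi(t)/t^2$ I would invoke the representation $\Phi(t) \sim \sup\{\delta_{\eqnorma}(t) : \eqnorma \text{ is } 2\text{-equivalent to } \norma\}$: each $\delta_{\eqnorma}$ is a generalized cotype by Figiel-Pisier, and Figiel's Theorem~1.8 in \cite{figiel2} (already used in the proof of Theorem~\ref{UMD-renorm}) produces a convex cotype $\widetilde\delta_{\eqnorma} \geq c\,\delta_{\eqnorma}$ with $\widetilde\delta_{\eqnorma}(t^{1/2})$ convex; the latter forces the map $s \mapsto \widetilde\delta_{\eqnorma}(s)/s^2$ to be non-decreasing, and the supremum of such ratios over the equivalent norms is still non-decreasing.

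The hard part will be that Figiel's improvement is a priori one-sided, $\widetilde\delta_{\eqnorma} \geq c\,\delta_{\eqnorma}$, so that the supremum of the improved cotypes could strictly overshoot $\Phi$ and only be controlled from above by $\QQ_X^{-1}$ from Theorem~\ref{sup-cotype}. To close this gap one would need either a quantitative version of Figiel's construction with a universal two-sided constant, or an independent argument through the Szlenk characterization $\NN_X \sim \Sz(B_{L^2(X)},\cdot)$ from Theorem~\ref{main-Szlenk}, exploiting the isometric copy of $\ell_2$ inside $L^2(X)$ and the $t^{-2}$ growth of the corresponding convex Szlenk derivations to force the required $t^2$-comparison directly at the level of indices.
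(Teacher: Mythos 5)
Your part (a) coincides with the paper's argument, and your reduction of (b) to the statement that $\Phi(t)/t^2$ is equivalent to a non-decreasing function (followed by the integral-of-the-running-supremum construction, using the doubling inherited from supermultiplicativity) is sound and is exactly the standard way to finish. The genuine gap is the one you yourself flag: the route through Figiel--Pisier and Figiel's Theorem~1.8 cannot establish the monotonicity of $\Phi(t)/t^2$. The improved cotypes $\widetilde\delta_{\eqnorma}\geq c\,\delta_{\eqnorma}$ are only controlled from above by $\QQ_X(t)^{-1}$, and the paper nowhere claims $\QQ_X^{-1}\sim\NN_X^{-1}$ in general (only in UMD or l.u.st.\ spaces), so the supremum of the improved functions may overshoot $\Phi$ and the two-sided comparison does not close. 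Neither of your two suggested repairs is carried out, so as written the proof of (b) is incomplete.

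The missing ingredient is much more elementary and bypasses cotypes entirely: it is the classical lemma of Figiel (see \cite[pp.~65--66]{LT}, used again in the paper's proof of Theorem~\ref{dos}) that for \emph{every} norm the function $t^{-2}\delta_{\eqnorma}(t)$ is equivalent to a monotone function with a \emph{universal} constant of equivalence, i.e.\ there is $b>0$ independent of the space and the norm such that $\delta_{\eqnorma}(t)/t^2\geq b\,\delta_{\eqnorma}(\tau)/\tau^2$ for $t\geq\tau$. Because the constant is universal, this inequality survives the supremum over all $2$-equivalent norms in the representation $\Phi(t)\sim\sup\{\vartheta_{\eqnorma}(t)\}$, giving directly that $\Phi(t)/t^2$ is equivalent to a non-decreasing function; your own convexification argument then yields (b). This is the paper's (one-line) proof, and it replaces your detour through generalized cotypes with a two-sided, norm-uniform estimate on the modulus of convexity itself.
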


\begin{proof}
Statement (a) is a consequence of the known submultiplicativity of $\NN_X(t)$ with the original definition (see Theorem~\ref{main1}) or the equivalent function $\Sz(B_{L^2(X)},t)$. Statement (b) comes from the fact that for every equivalent norm $t^{-2}\delta_\eqnorma (t)$ is equivalent to a non-increasing function with a universal constant of equivalency (just follow the constants along the proofs in \cite[pp. 65--66]{LT}).
\end{proof}

\begin{prop}
Let $\Phi(t)$ a function satisfying the statements (a) and (b) of Proposition~\ref{best-st}. Then there exists a uniformly convex Banach space $X$ such that $\delta_X(t) \sim \Phi(t)$ and that modulus cannot be improved asymptotically (equivalently, $\Phi(t)^{-1} \sim \NN_X(t)$).
\end{prop}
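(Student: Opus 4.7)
The plan is to construct $X$ as the Orlicz sequence space $\ell_\Phi$. After replacing $\Phi$ by an equivalent function one may assume $\Phi$ is itself supermultiplicative and $\Phi(t^{1/2})$ is convex. Condition (b) (with a suitable normalization) forces $\Phi(t) \leq C t^2$ on $[0,1]$, so $\Phi$ is a genuine Orlicz function, and the supermultiplicativity yields a power-type lower bound $\Phi(t) \succeq t^q$ for some finite $q$; hence $\Phi$ satisfies the $\Delta_2$ condition and $\ell_\Phi$ is a reflexive Banach space with symmetric $1$-unconditional basis $(e_i)$ satisfying $\|\sum_{k=1}^n e_k\| = 1/\Phi^{-1}(1/n)$. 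To prove $\delta_X \succeq \Phi$, note that condition (b) is exactly the Orlicz-theoretic $2$-convexity hypothesis, and together with $\Delta_2$ it produces a Clarkson-type inequality
\[ \Phi\!\left(\frac{\|x+y\|}{2}\right) + c\,\Phi\!\left(\frac{\|x-y\|}{2}\right) \leq \frac{1}{2}\bigl(\Phi(\|x\|) + \Phi(\|y\|)\bigr) \]
valid on $\ell_\Phi$ (the content of Figiel's Orlicz renorming theorem \cite{figiel2}). Evaluating at $\|x\| = \|y\| = 1$ and $\|x-y\| \geq t$ immediately gives $\delta_X(t) \succeq \Phi(t)$.

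The optimality step shows that no equivalent norm on $X$ can asymptotically beat $\Phi$. Test the cotype condition on $x_k := \Phi^{-1}(1/n)\,e_k$ for $k = 1, \dots, n$: by symmetry of the basis, $\int_0^1 \|\sum_k r_k(t) x_k\|\, dt = \Phi^{-1}(1/n) \cdot \|\sum_k e_k\| = 1$, so for any cotype $\phi$ of $\ell_\Phi$,
\[ n\, \phi(\Phi^{-1}(1/n)) = \sum_{k=1}^n \phi(\|x_k\|) \leq C. \]
Using $\Delta_2$ to interpolate between the discrete values $\Phi^{-1}(1/n)$, this forces $\phi \preceq \Phi$. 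By the Figiel--Pisier theorem, the modulus of convexity of any equivalent norm on $X$ is itself a generalized cotype of $X$, hence $\preceq \Phi$; this yields $\sup\{\delta_\eqnorma\} \preceq \Phi$, i.e., $\NN_X \succeq \Phi^{-1}$. Combined with the direct estimate from the previous paragraph giving $\NN_X \preceq \Phi^{-1}$, we conclude $\NN_X(t) \sim \Phi(t)^{-1}$, as required.

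The principal obstacle is the sharp Clarkson-type inequality in the first paragraph. While conditions (a) and (b) correspond precisely to the $\Delta_2$-regularity and $2$-convexity hypotheses of Orlicz theory, extracting a modulus of convexity of order exactly $\Phi$ (rather than a strictly weaker $\Phi^{\alpha}$ with $\alpha > 1$) requires Figiel's explicit analysis of symmetric Orlicz bases in \cite{figiel2}. An alternative would be to appeal directly to Figiel's renorming results \cite{figiel,figiel2} for spaces with symmetric unconditional bases, which construct such spaces with prescribed cotype and modulus simultaneously, thereby building in both halves of the required estimate by design.
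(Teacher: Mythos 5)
Your proposal is correct and takes essentially the same route as the paper, whose entire proof consists of replacing $\Phi$ by an equivalent convex $\Delta_2$ Orlicz function $\phi$ and citing the fine estimates for the Orlicz sequence space $\ell_\phi$ (Maleev--Troyanski, see also Lindenstrauss--Tzafriri). You supply exactly the content of those citations --- the lower bound $\delta_{\ell_\Phi}(t)\succeq \Phi(t)$ from $2$-convexity plus $\Delta_2$, and the optimality $\NN_X(t)\succeq\Phi(t)^{-1}$ via the cotype computation on the unit vector basis combined with Figiel--Pisier --- and you correctly flag that the Clarkson-type inequality is the one step that genuinely requires the cited Orlicz-space analysis rather than following formally from (a) and (b).
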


\begin{proof}
The function $\Phi$ is equivalent to a convex function $\phi$ which satisfies the $\Delta_2$ property at $0$. Then
the Orlicz space $\ell_\phi$ does the work after the fine estimations in \cite{MT}, see also \cite[p. 67]{LT}.
\end{proof}

We include the following sufficient condition for the optimality of a modulus of uniform convexity based on the modulus of asymptotic uniform smoothness $\overline{\rho}_X(t)$ which is defined as
\[ \overline{\rho}_{\norma}(t) = \sup_{\norm{x}=1}\inf_{\dim(X/Y)<\infty}\sup_{y\in Y, \norm{y}=1} \norm{x+ty}-1 .\]

\begin{prop}
Let $X$ be a uniformly convex space such that $\overline{\rho}_{\eqnorma}(t) \preceq \delta_X(ct)$ for some $c>0$ and some equivalent norm $\eqnorma$. Then the modulus of uniform convexity of $X$ is already a best one.
\end{prop}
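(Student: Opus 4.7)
The plan is to show $\NN_X(t) \succeq \delta_X(t)^{-1}$. Combined with the trivial $\NN_X(t) \leq \delta_X(t)^{-1}$ (coming from $\delta_X$ being one candidate in the supremum defining $\NN_X^{-1}$), this will give $\NN_X \sim \delta_X^{-1}$, i.e., $\delta_X$ is asymptotically the best modulus of convexity.

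First I would establish the general inequality $\delta_Y(t)\preceq \overline{\rho}_Y(t)$ valid for every norm. Given a unit $x$, by definition of $\overline{\rho}_Y$ there is a finite-codimensional subspace $Z$ such that $\norm{x\pm ty}_Y\leq 1+\overline{\rho}_Y(t)+\varepsilon$ for every unit $y\in Z$ (using $\pm y\in Z$). Applying the definition of $\delta_Y$ to the pair $u'=(x+ty)/(1+\overline{\rho}_Y(t)+\varepsilon)$, $v'=(x-ty)/(1+\overline{\rho}_Y(t)+\varepsilon)$ — which lie in the closed unit ball, have midpoint of norm $1/(1+\overline{\rho}_Y(t)+\varepsilon)$, and are $2t/(1+\overline{\rho}_Y(t)+\varepsilon)$-separated — together with the standard extension of the convexity inequality to subunit vectors yields $\delta_Y\bigl(2t/(1+\overline{\rho}_Y(t)+\varepsilon)\bigr)\leq \overline{\rho}_Y(t)+\varepsilon$, so in the limit $\delta_Y(2t)\preceq \overline{\rho}_Y(t)$. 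Applied to $Y=(X,\eqnorma)$ and combined with the hypothesis we get $\delta_{\eqnorma}(t)\preceq \delta_X(ct)$, and the reverse inequality $\delta_{\eqnorma}(t)\succeq \delta_X(c't)$ follows from the equivalence of $\norma$ and $\eqnorma$ (fact (i) for moduli). Therefore $\delta_{\eqnorma}\sim \overline{\rho}_{\eqnorma}\sim \delta_X$, and the norm $\eqnorma$ is \emph{balanced}: its AUS modulus matches its UC modulus, both of order $\delta_X$.

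Next I would exploit this balance to build, for each small $t>0$, a bush inside $B_{(X,\eqnorma)}$ of depth at least $c\,\delta_X(t)^{-1}$. Starting from the origin and branching inductively, at each node $y_s$ I would use the AUS property of $\eqnorma$ to pick an asymptotic unit direction $z_s$ in a finite-codimensional subspace such that the two children $y_s\pm t z_s$ are $2t$-separated and satisfy $\eqnorm{y_s\pm tz_s}\leq \eqnorm{y_s}\bigl(1+\overline{\rho}_{\eqnorma}(t/\eqnorm{y_s})\bigr)$; a geometric bookkeeping using $\overline{\rho}_{\eqnorma}\sim \delta_X$ then shows that the bush stays inside $B_{(X,\eqnorma)}$ for $\sim \delta_X(t)^{-1}$ generations. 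Such a bush gives $\Dz(B_{(X,\eqnorma)},t)\succeq \delta_X(t)^{-1}$, and fact (i) (invariance of $\Dz$ under equivalent renormings) combined with the inclusion $B_X\subset B_{L^2(X)}$ transfers this to $\Dz(B_{L^2(X)},t)\succeq \delta_X(t)^{-1}$. Proposition~\ref{prop:eqL2} and Theorem~\ref{main-Szlenk} then deliver $\NN_X(t)\sim \Sz(B_{L^2(X)},t)\sim \Dz(B_{L^2(X)},t)\succeq \delta_X(t)^{-1}$, as desired.

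The main obstacle is the bush construction in the second step: one must simultaneously maintain $t$-separation of siblings and keep the geometric norm growth under control so that $\delta_X(t)^{-1}$ levels fit inside the unit ball. In the non-separable setting this will probably require an ultrafilter or successive-extraction argument to produce the asymptotic directions $z_s$ uniformly. The hypothesis $\overline{\rho}_{\eqnorma}\preceq \delta_X$ is the decisive input: it guarantees that each branching step costs at most $\delta_X(t)$ in $\eqnorma$-norm, making $\delta_X(t)^{-1}$ iterations affordable. Without this compatibility—as in Figiel's super-reflexive space with no best modulus—the AUS budget is too large and the bush terminates prematurely, which is exactly why the conclusion can fail in general.
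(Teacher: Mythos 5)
Your overall strategy is viable and genuinely different in execution from the paper's. The paper's proof is three lines: it cites \cite[Proposition~4.9]{raja3} for the inequality $\Sz(B_X,t)^{-1}\leq a\,\overline{\rho}_{\eqnorma}(bt)$, feeds in the hypothesis to get $\NN_X(t)^{-1}\leq a'\delta_X(b't)$, and removes the constant $b'$ inside the argument by the submultiplicativity of $\NN_X$. You instead re-prove the essential content of that citation from scratch: a separated dyadic bush of depth $\sim\overline{\rho}_{\eqnorma}(t)^{-1}$ built from asymptotic directions, giving a lower bound on the dentability index, then routed through $\Dz(B_{L^2(X)},t)\sim\Sz(B_{L^2(X)},t)\sim\NN_X(t)$. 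The bush argument is sound in outline (the multiplicative cost $1+\overline{\rho}_{\eqnorma}(t)$ per generation, the parent--child separation, and the standard induction showing the root survives $N$ dentability derivations all work, and no ultrafilter is needed since the bush is finite), so your approach buys self-containedness at the price of carrying out a construction the paper outsources. Note also that the detour through $L^2(X)$ is avoidable: fact (iii) applied to an arbitrary equivalent norm together with fact (i) already gives $\NN_X(t)\succeq\Dz(B_X,4t)$ directly.

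Two criticisms. First, a genuine error in your opening paragraph: the claim that $\delta_{\eqnorma}(t)\succeq\delta_X(c't)$ ``follows from the equivalence of $\norma$ and $\eqnorma$'' is false. Moduli of uniform convexity are not preserved under equivalent renorming --- that instability is the entire subject of the paper (fact (i) concerns the Szlenk-type derivations, and the displayed comparison $\delta_{\eqnorma}(\gamma^{-1}t)\leq\vartheta_{\eqnorma}(t)\leq\delta_{\eqnorma}(\gamma t)$ relates the relative and absolute moduli of the \emph{same} norm $\eqnorma$, not $\delta_{\eqnorma}$ to $\delta_X$). Fortunately this claim, and hence the asserted ``balance'' $\delta_{\eqnorma}\sim\overline{\rho}_{\eqnorma}\sim\delta_X$, is not load-bearing: the bush only needs the one-sided hypothesis $\overline{\rho}_{\eqnorma}(t)\preceq\delta_X(ct)$, so you should simply delete that sentence. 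Second, your chain ends with $\NN_X(t)^{-1}\preceq\delta_X(ct)$ for a constant $c$ that may exceed $1$, and you do not say how to pass to $\NN_X(t)^{-1}\preceq\delta_X(t)$; the paper does this by playing with the submultiplicativity of $\NN_X$, and the same step is needed in your version.
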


\begin{proof}
By \cite[Proposition~4.9]{raja3} we have $\Sz(B_X,t)^{-1} \leq a \, \overline{\rho}_{\eqnorma}( b t) $ for some $a,b >0$. The hypothesis implies now that  $\NN_X(t)^{-1} \leq a' \delta_X(b' t)$ for some $a',b'>0$. Playing with the submultiplicativity of $\NN_X$ we get easily that $\NN_X(t)^{-1} \preceq \delta_X(t)$  which is equivalent to the desired conclusion.
\end{proof}

Note that  $\overline{\rho}_{\ell^p}(t) \sim t^p $ for $1 \leq p < +\infty$, so the optimality of the modulus of uniform convexity of $\ell^p$ follows from the result for $p \geq 2$ (the case $1<p<2$ is obvious because the modulus is already $\sim t^2$). As $L^p$ contains an isometric copy of $\ell^p$ the optimality of the modulus of uniform convexity of $L^p$  follows as a consequence.\\ 

Finally, we will prove the result announced in the introduction on the existence of a uniformly convex  renorming with modulus of power type $2$.

\begin{proof} {\it of Theorem~\ref{dos}}.
The hypothesis and  \cite[Theorem~1.5]{raja2} implies that for some $a>0$ such that for every $\tau \in (0,1]$ there is a 2-equivalent norm $\norma_\tau$ such that 
$$ \delta_{\norma_\tau}(\tau) \geq a \tau^2 .$$
On the other hand $t^{-2}  \delta_{\norma_\tau}(t)$ is equivalent to a non-decreasing function with a constant $b>0$ which is universal \cite[Corollary~11]{figiel} thus
$$ b \, \frac{\delta_{\norma_\tau}(\tau)}{\tau^2} \leq   \frac{\delta_{\norma_\tau}(t)}{t^2} $$
if $t \geq \tau$. Therefore $\delta_{\norma_\tau}(t) \geq ab \, t^2$ if $t \geq \tau$. Now, let $\norma_n$ the norm $\norma_\tau$ for $\tau=1/n$ and define
$$ \eqnorm{ x } = \lim_n \, \sup\{ \|x\|_m: m \geq n \}. $$
This norm verifies that $\delta_{\eqnorma}(t) \geq ab \, t^2$ just arguing like in the proof of Proposition~\ref{glue}. Clearly the modulus is not asymptotically improvable by N\"ordlander's \cite[Proposition~A.1]{BL} and neither the cotype by Dvoretzky's \cite[Theorem~12.10]{BL}.
\end{proof}

\bibliographystyle{siam}
\bibliography{biblio}

\end{document}